\providecommand{\allOne}{\mathbb{1}}
\title{Blended Conditional Gradients: \\ the unconditioning
  of conditional gradients}
\author[1]{Gábor Braun}
\author[1]{Sebastian Pokutta}
\author[1]{Dan Tu}
\author[2]{Stephen Wright}
\affil[1]{ISyE, Georgia Institute of Technology, Atlanta, GA\\
  \texttt{\{gabor.braun,sebastian.pokutta\}@isye.gatech.edu,
    dan.tu@gatech.edu}}
\affil[2]{Computer Sciences Department, University of Wisconsin\\
  Madison, WI\\
  \texttt{swright@cs.wisc.edu}}
\newcommand{\sidebyside}[2]{%
  \par
  \hfil
  \includegraphics{graphics/#1}
  \hfil
  \includegraphics{graphics/#2}
  \hfil
  \par}
\setlist[enumerate]{label=(\roman*)}
\newlist{enumerate*}{enumerate*}{1}
\setlist[enumerate*]{label=(\arabic*),
  after=.,
  itemjoin={{, }}, itemjoin*={{, or }}}
\newcommand{\LPsep}[1]{\ensuremath{\operatorname{LPsep}\sb{#1}}}
\newcommand{\Sora}{\TextOrMath{\hyperref[ora:simplex]{Oracle
      \ensuremath{\operatorname{SiDO}}}}{\operatorname{SiDO}}\xspace}
\newtheorem{theorem}{Theorem}[section]
\newtheorem{fact}[theorem]{Fact}
\newtheorem{corollary}[theorem]{Corollary}
\newtheorem{lemma}[theorem]{Lemma}
\theoremstyle{definition}
\theoremstyle{remark}
\DeclarePairedDelimiterX{\normSimple}[1]{\lVert}{\rVert}
{\ifblank{#1}{\mathord{\cdot}}{#1}}
\DeclarePairedDelimiter{\size}{\lvert}{\rvert}
\newcommand{\diam}[1]{\operatorname{diam}(#1)} 
\newcommand{\norm}[2][]{\normSimple{#2}\ifblank{#1}{}{\sb{#1}}}
\newcommand{\dualnorm}[2][]{\mathinner{%
    \normSimple{#2}\sb{\ifblank{#1}{}{#1,}*}}}
\DeclarePairedDelimiter{\card}{\lvert}{\rvert}
\DeclarePairedDelimiterXPP{\supp}[1]{\operatorname{supp}}(){}{#1}
\newcommand{\R}{\mathbb R}
\newcommand{\NPhi}{N_{\Phi}}
\newcommand{\Nd}{N_{\text{desc}}}
\newcommand{\Td}{T_{\text{desc}}}
\newcommand{\trace}[1]{\operatorname{Tr}\left[{#1}\right]}
\DeclareMathOperator*{\conv}{conv}
\DeclareMathOperator*{\argmax}{argmax}
\DeclareMathOperator*{\argmin}{argmin}
\begin{document}
\maketitle{}
%

\begin{abstract}
  We present a \emph{blended conditional gradient} approach for
  minimizing a smooth convex function over a polytope $P$, combining
  the Frank–Wolfe algorithm (also called conditional gradient) with
  gradient-based steps, different from away steps and pairwise steps,
  but still achieving linear convergence for strongly convex
  functions, along with good practical performance. Our approach
  retains all favorable properties of conditional gradient algorithms,
  notably avoidance of projections onto $P$ and maintenance of
  iterates as sparse convex combinations of a limited number of
  extreme points of $P$.  The algorithm is \emph{lazy}, making use of
  inexpensive inexact solutions of the linear programming subproblem that
  characterizes the conditional gradient approach.  It decreases
  measures of optimality rapidly, both in the
  number of iterations and in wall-clock time, outperforming even the
  lazy conditional gradient algorithms of \cite{braun2016lazifying}.
  We also present a streamlined version of the algorithm for
  the probability simplex.
\end{abstract}

\section{Introduction}
\label{sec:introduction}

A common paradigm in convex optimization is
minimization of a smooth convex function \(f\) over a polytope $P$.
The conditional gradient (CG) algorithm, also known as ``Frank–Wolfe''
\cite{frank1956algorithm}, \cite{levitin1966constrained} is enjoying
renewed popularity because it can be implemented efficiently to
solve important problems in data analysis. It is a first-order method,
requiring access to gradients $\nabla f(x)$ and function values
$f(x)$.  In its original form, CG employs a linear programming (LP)
oracle to minimize a linear function over the polytope $P$ at each
iteration.  The cost of this operation depends on the complexity of
\(P\). The base method has many extensions with the aim of improving
performance, like reusing previously found points of \(P\)
to complement or even sometimes omit LP oracle calls
\cite{FW-converge2015}, or
using oracles weaker than an LP oracle to reduce cost of oracle calls
\cite{braun2016lazifying}.

In this work, we describe a \emph{blended conditional gradient (BCG)}
approach, which is a novel combination of
several previously used ideas into a single algorithm
with similar theoretical convergence rates
as several other variants of CG that have been
studied recently, including pairwise-step and away-step variants and
the lazy variants in \cite{braun2016lazifying}, however, with very fast performance and in several
cases, empirically higher convergence rates compared to
other variants. In particular, while the lazy variants of \cite{braun2016lazifying}
have an advantage over baseline CG when the LP oracle is expensive, our
BCG approach consistently outperforms the other variants in more
general circumstances, both in per-iteration progress and in
wall-clock time.

In a nutshell, BCG is a first-order algorithm that chooses among
several types of steps based on the gradient $\nabla f$ at the
current point.  It also maintains an ``active vertex set'' of
solutions from previous iterations, like e.g., the Away-step
Frank–Wolfe algorithm.  Building on \cite{braun2016lazifying}, BCG
uses a ``weak-separation oracle'' to find a vertex of $P$ for which
the linear objective attains some fraction of the reduction in $f$
the LP oracle would achieve, typically by first
searching among the current set of active vertices and
if no suitable vertex is found,
the LP oracle used in the original FW algorithm may be
deployed.  On other iterations, BCG employs a ``simplex descent
oracle,'' which takes a step within the convex hull of the active
vertices, yielding progress either via reduction in function value (a
``descent step'') or via culling of the active vertex set (a ``drop
step'').  For example, the oracle can make a single (vanilla) gradient
descent step.  The size of the active vertex set typically remains
small, which benefits both the efficiency of the method and the
``sparsity'' of the final solution (i.e., its representation as a convex
combination of a relatively small number of vertices). Compared to the
Away-step and Pairwise-step Frank–Wolfe algorithms, the simplex
descent oracle realizes an improved use of the active set by
(partially) optimizing over its convex hull via descent steps, similar
to the Fully-corrective Frank–Wolfe algorithm and also the approach in
\cite{rao2015forward} but with a better step selection criterion: BCG
alternates between the various steps using estimated progress from
dual gaps. We hasten to stress that BCG remains projection-free.

\subsection*{Related work}
\label{sec:related-work}

There has been an extensive body of work on conditional gradient algorithms;
see the excellent overview of \cite{jaggi2013revisiting}. Here we
review only those papers most closely related to our work.

Our main inspiration comes from
\cite{braun2016lazifying,lan2017conditional}, which introduce the
weak-separation oracle as a lazy alternative to calling the
LP oracle in every iteration. It is influenced too by the method of
\cite{rao2015forward}, which maintains an active vertex set, using
projected descent steps to improve the objective over the convex hull
of this set, and culling the set on some steps to keep its size under
control.
While the latter method is a heuristic
with no proven convergence bounds
beyond those inherited from the standard Frank–Wolfe method,
our BCG algorithm employs a criterion for \emph{optimal trade-off between the various
steps}, with a \emph{proven} convergence rate equal to
state-of-the-art Frank–Wolfe variants up to a constant factor.

Our main result shows linear convergence of BCG for strongly convex
functions.  Linearly convergent variants of CG were studied as early
as \cite{guelat1986some} for special cases and
\cite{garber2013linearly} for the general case (though the latter work
involves very large constants).  More recently, linear convergence has
been established for various pairwise-step and away-step variants of
CG in \cite{FW-converge2015}. Other memory-efficient
decomposition-invariant variants were described in \cite{LDLCC2016}
and \cite{bashiri2017decomposition}.  Modification of descent
directions and step sizes, reminiscent of the drop steps used in BCG,
have been considered by \cite{Freund2016,freund2017extended}.  The use
of an inexpensive oracle based on a subset of the vertices of $P$, as
an alternative to the full LP oracle, has been considered in
\cite{kerdreux2018frank}. \cite{garber2018fast} proposes a fast
variant of conditional gradients for matrix recovery problems.

BCG is quite distinct from the Fully-corrective Frank–Wolfe algorithm
(FCFW) (see, for example,
\cite{holloway1974extension,FW-converge2015}).
Both approaches maintain active vertex sets, generate iterates that
lie in the convex hulls of these sets, and alternate between
Frank–Wolfe steps generating new vertices and correction
steps optimizing within the current active vertex set. However,
convergence analyses of the FCFW algorithm assume that the correction
steps have unit cost, though they can be quite expensive in practice.
For BCG, by
contrast, we assume only a \emph{single} step of gradient descent type
having unit cost (disregarding cost of line search).
For computational results comparing BCG and FCFW, and illustrating
this issue, see
Figure~\ref{fig:fcfwComp} and the discussion in
Section~\ref{sec:comp-results}.

\subsection*{Contribution}
\label{sec:contribution}

Our contribution is summarized as follows:

\begin{description}
\item[Blended Conditional Gradients (BCG).]
  The BCG approach blends different types of descent steps:
  Frank–Wolfe steps from \cite{frank1956algorithm},
  optionally lazified
  as in \cite{braun2016lazifying}, and gradient descent steps over
  the convex hull of the current active vertex set. It avoids
  projections
  and does not use away steps and pairwise steps, which are elements
  of other popular variants of CG.  It achieves linear convergence for
  strongly convex functions (see Theorem~\ref{thm:pgd-FW}), and
  \(O(1/t)\) convergence after \(t\) iterations for general smooth
  functions.  While the linear convergence proof of the Away-step
  Frank–Wolfe Algorithm \cite[Theorem~1, Footnote~4]{FW-converge2015}
  requires the objective function \(f\) to be defined on the Minkowski
  sum \(P - P + P\), BCG does not need \(f\) to be defined outside the
  polytope \(P\).  The algorithm has complexity comparable to
  pairwise-step or away-step variants of conditional gradients,
  both in time measured as number of iterations and in space
  (size of active set).
  It is affine-invariant and parameter-free; estimates of
  such parameters as smoothness, strong convexity, or the diameter of
  \(P\) are not required.  It maintains iterates as (often sparse)
  convex combinations of vertices, typically much sparser than the
  baseline CG methods, a property that is important for some
  applications. Such sparsity is due to the aggressive reuse of active
  vertices, and the fact that new vertices are added only as a kind of
  last resort.  In wall-clock time as well as per-iteration progress,
  our computational results show that BCG can be orders of magnitude
  faster than competimg CG methods on some problems.

\item[Simplex Gradient Descent (SiGD).]
  In Section~\ref{sec:proj-grad-desc-projfree}, we describe a new
  projection-free gradient descent procedure for minimizing a smooth
  function over the probability simplex, which can be used to
  implement the ``simplex descent oracle'' required by BCG,
  which is the module doing gradient descent steps.

\item[Computational Experiments.]
  We demonstrate the excellent computational behavior of BCG compared
  to other CG algorithms on standard problems, including video
  co-localization, sparse regression, structured SVM training, and
  structured regression.  We observe significant computational
  speedups and in several cases empirically better convergence rates.
\end{description}

\subsection*{Outline}
\label{sec:outline}

We summarize preliminary material in Section~\ref{sec:preliminaries},
including the two oracles that are the foundation of our BCG
procedure.  BCG is described and analyzed in
Section~\ref{sec:offline-pgd-lazy}, establishing linear convergence
rates.  The simplex gradient descent routine, which implements the
simplex descent oracle, is described in
Section~\ref{sec:proj-grad-desc-projfree}. We mention in
particular a variant of BCG that applies when $P$ is the probability
simplex, a special case that admits several simplifications and
improvements to the analysis.
Some possible enhancements to BCG are discussed in
Section~\ref{sec:impr-actu-impl}.
Our computational
experiments appear in
Section~\ref{sec:comp-results}.

\section{Preliminaries}
\label{sec:preliminaries}

We use the following notation: \(e_{i}\) is the \(i\)-th coordinate
vector, \(\allOne \coloneqq (1, \dots, 1) = e_{1} + e_{2} + \dotsb\)
is the all-ones vector, \(\norm{\cdot}\) denotes the Euclidean norm
(\(\ell_{2}\)-norm), \(D = \diam{P} = \sup_{u,v \in P} \norm[2]{u-v}\)
is the \(\ell_2\)-diameter of
\(P\), and \(\conv S\) denotes the convex hull of a set \(S\) of
points.  The \emph{probability simplex} \(\Delta^{k} \coloneqq \conv\{
e_{1}, \dotsc, e_{k}\}\) is the convex hull of the coordinate vectors
in dimension $k$.

Let \(f\) be a differentiable convex function.
Recall that $f$ is
\emph{$L$-smooth} if
\begin{equation}
  \label{eq:L}
  f(y) - f(x) - \nabla f(x) (y-x) \leq L \norm{y-x}^{2} / 2
  \quad
  \text{for all  \(x, y \in P\).}
\end{equation}
The function \(f\) has \emph{curvature \(C\)} if
\begin{equation}
  \label{eq:C}
  f(\gamma y + (1-\gamma) x) \leq f(x) +
  \gamma \nabla f(x) (y - x) + C \gamma^{2} / 2
  \quad
  \text{for all \(x,y \in P\) and \(0 \leq \gamma \leq 1\).}
\end{equation}
(Note that an \(L\)-smooth function always has curvature \(C \leq LD^2\).)
Finally, $f$ is \emph{strongly convex} if for some \(\alpha > 0\) we have
\begin{equation}
  \label{eq:sc}
  f(y) - f(x) - \nabla f(x) (y-x) \geq \alpha \norm{y-x}^{2} / 2
  \quad
  \text{for all \(x, y \in P\).}
\end{equation}
 We will use the following fact about strongly convex function when
 optimizing over \(P\).
\begin{fact}[Geometric strong convexity guarantee]\citep[Theorem~6 and
  Eq.~(28)]{FW-converge2015}
  \label{fact:strongConvP}
Given a strongly convex function \(f\), there is a value \(\mu > 0\)
called the \emph{geometric strong convexity} such that
\begin{equation}
  \label{eq:geometric-strong-convex}
  f(x) - \min_{y \in P} f(y)
  \leq \frac{\left(
      \max_{y \in S, z \in P} \nabla f(x) (y - z)
    \right)^{2}}{2 \mu}
\end{equation}
for any \(x \in P\) and for any subset \(S\) of the vertices of \(P\)
for which \(x\) lies in the convex hull of \(S\).
\end{fact}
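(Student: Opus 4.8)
My plan is to reduce, via strong convexity, to a purely geometric inequality about \(P\) --- the ``pyramidal-width'' bound --- and then to prove that. Set \(x^{*} \coloneqq \argmin_{y \in P} f(y)\) and assume \(x \neq x^{*}\), the case \(x = x^{*}\) being trivial. First I would apply \eqref{eq:sc} to the pair \((x, x^{*})\), giving \(f(x) - f(x^{*}) \leq \nabla f(x)(x - x^{*}) - \tfrac{\alpha}{2}\norm{x - x^{*}}^{2}\), and combine it with the elementary scalar inequality \(t - \tfrac{\alpha}{2}s^{2} \leq t^{2}/(2\alpha s^{2})\) --- valid for \(s > 0\), \(\alpha > 0\), \(t \in \R\) because it rearranges to \((\alpha s^{2} - t)^{2} \geq 0\) --- taking \(t = \nabla f(x)(x - x^{*})\), \(s = \norm{x - x^{*}}\), to get
\begin{equation*}
  f(x) - f(x^{*}) \leq \frac{\bigl(\nabla f(x)(x - x^{*})\bigr)^{2}}{2\alpha \norm{x - x^{*}}^{2}} .
\end{equation*}
For any vertex set \(S\) with \(x \in \conv S\), set \(g_{S}(x) \coloneqq \max_{y \in S,\, z \in P}\nabla f(x)(y - z) = \max_{y \in S}\nabla f(x)(y - x) + \max_{z \in P}\nabla f(x)(x - z)\); since \(x\) is a convex combination of \(S\) the first summand is \(\geq 0\), so \(g_{S}(x) \geq \max_{z \in P}\nabla f(x)(x - z) \geq \nabla f(x)(x - x^{*}) \geq f(x) - f(x^{*}) \geq 0\), the third step by convexity and the last by optimality of \(x^{*}\). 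Hence it is enough to find a constant \(\phi > 0\) \emph{depending only on \(P\)} with
\begin{equation}
  \label{eq:pyr}
  \nabla f(x)(x - x^{*}) \leq \frac{\norm{x - x^{*}}}{\phi}\, g_{S}(x)
\end{equation}
for every \(x \in P\) and every such \(S\); then \eqref{eq:geometric-strong-convex} holds with \(\mu \coloneqq \alpha\phi^{2}\).

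It remains to prove \eqref{eq:pyr}, which is a statement about \(P\) only: \(f\) and \(x^{*}\) enter through the point \(x^{*} \in P\) and the linear functional \(\nabla f(x)\), and \eqref{eq:pyr} is positively homogeneous in the latter; since enlarging \(S\) only enlarges \(g_{S}(x)\), I may also assume \(x\) lies in the relative interior of \(\conv S\). I would take \(\phi\) to be the \emph{pyramidal width} of \(P\), equivalently its \emph{facial distance} \(\Phi(P) \coloneqq \min_{F} \operatorname{dist}\!\bigl(F, \conv(\mathcal{V} \setminus F)\bigr)\), the minimum over nonempty proper faces \(F\) of \(P\), with \(\mathcal{V}\) the vertex set and \(\mathcal{V} \setminus F\) the vertices not on \(F\); being a minimum over finitely many faces of distances between pairs of disjoint compact polytopes, \(\Phi(P) > 0\). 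When \(\norm{x - x^{*}} \geq \Phi(P)\), the trivial bound \(\nabla f(x)(x - x^{*}) \leq g_{S}(x)\) derived above already gives \eqref{eq:pyr}. When \(\norm{x - x^{*}} < \Phi(P)\) one must work locally: decompose the barycentric representation of \(x\) over \(S\) (and of \(x^{*}\) over \(\mathcal{V}\)) relative to the face of \(P\) on which \(\nabla f(x)\) is minimized, and bound \(\nabla f(x)(x - x^{*})\) by the component of \(x - x^{*}\) normal to that face, where \(\Phi(P)\) becomes available; this is in essence the argument of \cite{FW-converge2015}.

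The hard part is precisely \eqref{eq:pyr} \emph{with a constant that does not depend on \(f\), \(x\), or \(S\)}. A naive decomposition is not enough: writing \(x^{*} - x = \sigma(p - q)\) with \(\sigma > 0\) and \(p\), \(q\) in the convex hulls of two disjoint vertex subsets --- those on which \(x^{*}\), respectively \(x\), carries strictly more barycentric weight --- one obtains \(\nabla f(x)(x - x^{*}) \leq \sigma\, g_{S}(x)\) and \(\norm{x - x^{*}} = \sigma\norm{p - q}\), hence \eqref{eq:pyr} only with \(\phi = \norm{p - q}\), and \(\norm{p - q}\) may be arbitrarily small (take \(P = [0,1]^{2}\), \(S\) one diagonal, \(x\) on that diagonal tending to the centre, and \(x^{*}\) the centre written via the other two vertices: \(\norm{p - q} = \norm{x^{*} - x} \to 0\), although \eqref{eq:pyr} does hold in these instances with a fixed \(\phi\)). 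Escaping this degeneracy --- arranging the decomposition so that one of the two vertex subsets lies in a single proper face of \(P\) that the other avoids, so that the gap between them is controlled by \(\Phi(P)\) --- is the delicate step, and is what the pyramidal-width / facial-distance machinery accomplishes.
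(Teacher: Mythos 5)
The paper does not actually prove this statement: it is imported verbatim as a \emph{Fact} from \cite{FW-converge2015} (their Theorem~6 combined with their Eq.~(28)), so the comparison has to be against that source. Your reduction follows the same route and the steps you do carry out are correct: strong convexity applied to the pair \((x,x^{*})\) plus the scalar inequality \(t-\tfrac{\alpha}{2}s^{2}\le t^{2}/(2\alpha s^{2})\) gives \(f(x)-f(x^{*})\le\bigl(\nabla f(x)(x-x^{*})\bigr)^{2}/(2\alpha\norm{x-x^{*}}^{2})\); the observation that \(g_{S}(x)\) dominates \(\max_{z\in P}\nabla f(x)(x-z)\) because \(x\in\conv S\) is exactly how \cite{FW-converge2015} pass from the Frank–Wolfe gap to the pairwise gap; and the factorization \(\mu=\alpha\phi^{2}\) with \(\phi\) the pyramidal width is precisely the decomposition the paper mentions immediately after the Fact.

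The genuine gap is that the inequality you isolate as ``what remains'' --- \(\nabla f(x)(x-x^{*})\le\norm{x-x^{*}}\,g_{S}(x)/\phi\) with \(\phi>0\) depending only on \(P\), uniformly over \(x\), \(x^{*}\), \(S\) and the linear functional --- is not a remainder: it \emph{is} Theorem~6 of \cite{FW-converge2015}, and essentially all of the work of the cited proof lives there. Your own counterexample (the square with \(S\) a diagonal) correctly shows why the naive split of \(x^{*}-x\) into positive and negative barycentric parts gives a vanishing denominator, and the repair --- arranging the decomposition so that one part lies on a proper face avoided by the other, which is what the pyramidal width, equivalently the facial distance \(\min_{F}\operatorname{dist}(F,\conv(\mathcal{V}\setminus F))\), quantifies --- is asserted but not carried out. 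The case split on \(\norm{x-x^{*}}\gtrless\Phi(P)\) does not help with this: after dividing by \(\norm{x-x^{*}}\) the quantity to be bounded is \(\nabla f(x)\,(x-x^{*})/\norm{x-x^{*}}\), which depends only on the \emph{direction} from \(x^{*}\) to \(x\), so assuming the distance small places no useful restriction and the entire difficulty survives in that branch. In short: a correct reduction and an accurate diagnosis of where the difficulty sits, but the theorem-bearing step is missing, so this is a proof outline rather than a proof.
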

The value of \(\mu\) depends both on \(f\) and the geometry of
\(P\). For example, a possible choice is decomposing $\mu$ as a
product of the form \(\mu = \alpha_f W_P^2\), where \(\alpha_f\) is
the strong convexity constant of \(f\) and \(W_P\) is the
\emph{pyramidal width of \(P\)}, a constant only depending on the
polytope \(P\);
see \cite{FW-converge2015}.

\subsection{Simplex Descent Oracle}
\label{sec:proj-grad-desc}

Given a convex objective function \(f\) and
an ordered finite set \(S = \{v_{1}, \dotsc, v_{k}\}\) of points,
we define \(f_{S} \colon \Delta^{k} \to
\mathbb{R}\) as follows:
\begin{equation} \label{eq:def.fS}
  f_{S}(\lambda) \coloneqq f\left( \sum_{i=1}^{k} \lambda_{i} v_{i}
  \right).
\end{equation}
When \(f_{S}\) is  \(L_{f_{S}}\)-smooth,
Oracle~\ref{ora:simplex} returns an improving point \(x'\)
in \(\conv S\) together with a vertex set \(S' \subseteq S\) such that
\(x' \in \conv S'\).

\begin{oracle}
  \caption{Simplex Descent Oracle \(\Sora(x, S, f)\)}
  \label{ora:simplex}
  \begin{algorithmic}
    \REQUIRE finite set \(S \subseteq \mathbb{R}^{n}\), point \(x \in
      \conv S\),
      convex smooth function \(f \colon \conv S \to \mathbb{R}^{n}\);
    \ENSURE finite set \(S'
    \subseteq S\), point \(x' \in \conv S'\) satisfying either
      \begin{description}
      \item[drop step:]
        \(f(x') \leq f(x)\)
        and \(S' \neq S\)
      \item[descent step:]
        \mbox{}\\[.5em] 
        \(f(x) - f(x') \geq [\max_{u,v \in S} \nabla f(x) (u - v)]^{2}
        / (4 L_{f_{S}})\)
      \end{description}
\end{algorithmic}
\end{oracle}

In Section~\ref{sec:proj-grad-desc-projfree} we provide an
implementation (Algorithm~\ref{alg:simplex-descent}) of this oracle
via a \emph{single descent step}, which avoids projection and does not
require knowledge of the smoothness parameter \(L_{f_{S}}\).

\subsection{Weak-Separation Oracle}
\label{sec:weak-separ-oracle}

\begin{oracle}
  \caption{Weak-Separation Oracle \(\LPsep{P}(c, x, \Phi, K)\)}
  \label{alg:weak-separate-oracle}
  \begin{algorithmic}
    \REQUIRE
    linear objective \(c \in \mathbb{R}^{n}\),
    point \(x \in P\),
    accuracy \(K \geq 1\),
    gap estimate \(\Phi > 0\);
    \ENSURE Either
    \begin{enumerate*}
    \item\label{item:positive}
      vertex \(y \in P\) with
      \(c (x - y) \geq \Phi / K\)
    \item\label{item:negative}
      \FALSE: \(c (x - z) \leq \Phi\) for all \(z \in P\)
    \end{enumerate*}
  \end{algorithmic}
\end{oracle}

The weak-separation oracle (Oracle~\ref{alg:weak-separate-oracle}) was
introduced in \cite{braun2016lazifying} to replace the LP oracle
traditionally used in the CG method.  Provided with a point \(x \in
P\), a linear objective \(c\), a target reduction value $\Phi>0$, and
an inexactness factor $K \ge 1$, it decides whether there exists \(y
\in P\) with \(cx - cy \geq \Phi/K\), or else certifies that \(cx - cz
\leq \Phi\) for all \(z \in P\). In our applications, \(c=\nabla
f(x)\) is the gradient of the objective at the current iterate \(x\).
Oracle~\ref{alg:weak-separate-oracle} could be implemented simply by
the standard LP oracle of minimizing $cz$ over $z \in P$.  However, it
allows more efficient implementations, including the following.
\begin{enumerate*}[itemjoin*={{, and }}]
\item \emph{Caching}: testing previously obtained vertices $y \in P$
  (specifically, vertices in the current
  active vertex set) to see if one of them satisfies \(cx - cy \geq
  \Phi/K\). If not, the traditional LP oracle could be called to
  either find a new vertex of $P$ satisfying this bound, or else to
  certify that \(cx - cz \leq \Phi\) for all \(z \in P\)
\item \emph{Early Termination}: Terminating the LP procedure as soon
  as a vertex of $P$ has been discovered that satisfies \(cx - cy \geq
  \Phi/K\). (This technique requires an LP implementation that
  generates vertices as iterates.) If the LP procedure runs to
  termination without finding such a point, it has certified that \(cx
  - cz \leq \Phi\) for all \(z \in P\)
\end{enumerate*}
In \cite{braun2016lazifying}
these techniques resulted in orders-of-magnitude speedups in
wall-clock time in the computational tests,
as well as sparse convex combinations of
vertices for the iterates \(x_t\), a desirable property in many
contexts.

\section{Blended Conditional Gradients}
\label{sec:offline-pgd-lazy}
Our BCG approach is specified as Algorithm~\ref{alg:LOLCG}. We
discuss the algorithm in this section and establish its convergence
rate.  The algorithm expresses each iterate $x_t$, $t=0,1,2,\dotsc$ as
a convex combination of the elements of the active vertex set, denoted
by $S_t$, as in the Pairwise and Away-step variants of CG. At each
iteration, the algorithm calls either Oracle~\ref{ora:simplex} or
Oracle~\ref{alg:weak-separate-oracle} in search of the next iterate,
whichever promises the smaller function value, using a test in
Line~\ref{line:pgdIf} based on an estimate of the dual gap. The
same greedy principle is used in the Away-step CG approach, and its
lazy variants.
A critical role in the algorithm (and particularly in the test of
Line~\ref{line:pgdIf}) is played by the value $\Phi_t$, which is a
current estimate of the primal gap --- the difference between the
current function value $f(x_t)$ and the optimal function value over
$P$.  When Oracle~\ref{alg:weak-separate-oracle} returns \textbf{false},
the curent value of $\Phi_t$ is discovered to be an overestimate of
the dual gap, so it is halved
(Line~\ref{line:halve-Phi}) and we proceed to the next iteration.
In subsequent discussion, we refer to $\Phi_t$ as the ``gap estimate.''

\begin{algorithm}
  \caption{Blended Conditional Gradients (BCG)}
  \label{alg:LOLCG}
  \begin{algorithmic}[1]
    \REQUIRE
      smooth convex function \(f\),
      start vertex \(x_{0} \in P\),
      weak-separation oracle \(\LPsep{P}\),
      accuracy \(K \geq 1\)
    \ENSURE points \(x_{t}\) in \(P\) for \(t=1, \dots, T\)
    \STATE \(\Phi_{0} \leftarrow
      \max_{v \in P} \nabla f(x_{0})(x_{0} - v) / 2\)
      \COMMENT{Initial gap estimate}
    \STATE \(S_{0} \leftarrow \{x_{0}\}\)
    \FOR{\(t=0\) \TO \(T-1\)}
      \STATE \(v^{A}_{t} \leftarrow
        \argmax_{v \in S_{t}} \nabla f(x_{t}) v\)
        \label{line:away-atom}
      \STATE \(v^{FW-S}_{t} \leftarrow
        \argmin_{v \in S_{t}} \nabla f(x_{t}) v\)
        \label{line:FW-in-S-atom}
      \IF{\(\nabla f(x_{t}) (v^{A}_{t} - v^{FW-S}_{t})
            \geq \Phi_{t}\)} \label{line:pgdIf}
        \STATE \(x_{t+1}, S_{t+1} \leftarrow \Sora(x_{t}, S_{t})\)
          \label{line:Sora} \COMMENT{either a drop step or a descent step}
        \STATE \(\Phi_{t+1} \leftarrow \Phi_{t}\)
      \ELSE
        \STATE \(v_{t} \leftarrow
          \LPsep{P}(\nabla f(x_{t}), x_{t}, \Phi_{t}, K)\)
        \IF{\(v_{t} = \FALSE\)}
          \STATE \label{line:LPNoneUpdate}\(x_{t+1} \leftarrow x_{t}\)
          \STATE \(\Phi_{t+1} \leftarrow \Phi_{t} / 2\)
            \label{line:halve-Phi}
            \COMMENT{gap step}
          \STATE \(S_{t+1} \leftarrow S_{t}\)
        \ELSE
          \STATE \(x_{t+1} \leftarrow
            \argmin_{x \in [x_{t}, v_{t}]} f(x)\)
            \label{line:line-search}
            \COMMENT{FW step, with line search}
          \STATE Choose \(S_{t+1} \subseteq S_{t} \cup \{v_{t}\}\)
            minimal such that \(x_{t+1} \in \conv S_{t+1}\).
            \label{line:minimal-S-FW}
          \STATE \(\Phi_{t+1} \leftarrow \Phi_{t}\)
        \ENDIF
      \ENDIF
    \ENDFOR
  \end{algorithmic}
\end{algorithm}

  In Line~\ref{line:minimal-S-FW}, the active set \(S_{t+1}\) is
  required to be minimal. By Caratheodory's theorem, this requirement
  ensures that \(\size{S_{t+1}} \leq \dim P + 1\).  In practice, the
  \(S_{t}\) are invariably small and no explicit reduction in size is
  necessary.  The key requirement, in theory and practice, is that if
  after a call to \Sora the new iterate \(x_{t+1}\) lies on a
  face of the convex hull of the vertices in $S_t$, then at least one
  element of \(S_{t}\) is dropped to form $S_{t+1}$.  This requirement
  ensures that the local pairwise gap in Line~\ref{line:pgdIf} is not
  too large due to stale vertices in \(S_{t}\), which can block
  progress.  Small size of the sets \(S_{t}\) is crucial to the
  efficiency of the algorithm, in rapidly determining the maximizer
  and minimizer of $\nabla f(x_t)$ over the active set \(S_{t}\)
  in Lines~\ref{line:away-atom} and~\ref{line:FW-in-S-atom}.

The constants in the convergence rate described in our main theorem
(Theorem~\ref{thm:pgd-FW} below) depend on a modified curvature-like
parameter of the function \(f\).  Given a vertex set \(S\) of \(P\),
recall from Section~\ref{sec:proj-grad-desc} the smoothness parameter
\(L_{f_{S}}\) of the function \(f_{S} \colon \Delta^k \to \R \)
defined by \eqref{eq:def.fS}.  Define the \emph{simplicial curvature}
$C^\Delta$ to be
\begin{equation} \label{eq:CDel}
  C^{\Delta} \coloneqq \max_{S
    \colon \size{S} \leq 2 \dim P} L_{f_{S}}
\end{equation}
to be the maximum of the
\(L_{f_{S}}\) over all possible active sets.  This  affine-invariant parameter
depends both on the shape of \(P\) and the function \(f\).  This is the relative
smoothness constant \(L_{f,A}\) from
the predecessor of \cite{condition2019},
namely \cite[Definiton~2a]{condition2018},
with an additional restriction: the
simplex is restricted to faces of dimension at most \(2 \dim P\),
which appears as a bound on the size of \(S\) in our formulation.
This restriction improves the constant by removing dependence on the
number of vertices of the polytope, and can probably replace the
original constant in convergence bounds.  We can immediately see the
effect in the common case of \(L\)-smooth functions, that the
simplicial curvature is of reasonable magnitude, specifically,
\begin{equation}
  \label{eq:upper-bound-simplicial-curvature}
  C^{\Delta} \leq  \frac{L D^{2} (\dim P)}{2}
  ,
\end{equation}
where \(D\) is the diameter of \(P\). This result follows from
\eqref{eq:CDel} and
the bound on \(L_{f_{S}}\) from
Lemma~\ref{lem:upper-bound-simplex-smoothness} in the appendix.
This
bound is not directly comparable with the upper bound \(L_{f, A}
\leq L D^{2} / 4\) in \cite[Corollary~2]{condition2018}, because the
latter uses the \(1\)-norm on the probability simplex, while we use the
\(2\)-norm, the norm used by projected gradients and our simplex
gradient descent.  The additional factor \(\dim P\) is explained by
the \(n\)-dimensional probability simplex having
constant minimum width \(2\) in \(1\)-norm,
but having minimum width dependent on the dimension $n$
(specifically,  \(\Theta(1/\sqrt{n})\)) in the \(2\)-norm.
Recall that the minimum width of a convex body \(P \subseteq \R^{n}\)
in norm \(\norm{\cdot}\) is \(\min_{\phi} \max_{u, v \in P} \phi(u-v)\),
where \(\phi\) runs over all linear maps \(\R^{n} \to \R\)
having dual norm \(\dualnorm{\phi} = 1\).
For the \(2\)-norm, this is just the minimum distance between parallel
hyperplanes such that \(P\) lies between the two hyperplanes.

For another comparison, recall the curvature bound \(C \leq L D^{2}\).
Note, however, that the algorithm and convergence rate below are affine
invariant, and the only restriction on the function \(f\) is that it
has finite simplicial curvature.
This restriction readily provides the curvature bound
\begin{equation} \label{eq:CCD}
  C  \leq 2 C^{\Delta},
\end{equation}
where the factor \(2\) arises as the square
of the diameter of the probability simplex \(\Delta^{k}\).
(See Lemma~\ref{lem:curvature-by-simplicial} in the appendix for details.)
Note that \(S\) is allowed to be large enough
so that every two points of \(P\) lie simultaneously
in the convex hull of some vertex subset \(S\), by
Caratheodory's theorem, which is needed for \eqref{eq:CCD}.

We describe the convergence of BCG (Algorithm~\ref{alg:LOLCG}) in the
following theorem.

\begin{theorem}
  \label{thm:pgd-FW}
  Let \(f\) be a strongly convex, smooth function
  over the polytope \(P\) with simplicial curvature \(C^{\Delta}\) and geometric strong
  convexity \(\mu\). Then
  Algorithm~\ref{alg:LOLCG} ensures
  \(f(x_{T}) - f(x^{*}) \leq \varepsilon\),
  where \(x^{*}\)
  is an optimal solution to \(f\)
  in \(P\) for some iteration index $T$ that satisfies
  \begin{equation}
    \label{eq:pgd-FW}
    T
    \leq
    \left\lceil \log \frac{2 \Phi_0}{\varepsilon} \right\rceil
    + 8 K
    \left\lceil \log \frac{\Phi_0}{2K C^\Delta} \right\rceil
    +
    \frac{64 K^2 C^\Delta}{\mu}
    \left\lceil \log \frac{4K C^\Delta}{\varepsilon} \right\rceil
    =
    O \left( \frac{C^\Delta}{\mu} \log \frac{\Phi_0}{\varepsilon}
    \right)
    ,
  \end{equation}
  where $\log$ denotes logarithms to the base $2$.
\end{theorem}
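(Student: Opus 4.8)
The plan is to track the gap estimate $\Phi_t$ and the primal gap $h_t \coloneqq f(x_t) - f(x^*)$ together, classifying iterations into three types and bounding how many of each can occur before $h_T \le \varepsilon$. First I would establish the invariant that $\Phi_t$ is a genuine upper bound on the \emph{dual gap} $g(x_t) \coloneqq \max_{v\in P}\nabla f(x_t)(x_t-v)$ up to the factor built into the oracle, and hence (since the dual gap dominates the primal gap) that $h_t \le 2\Phi_t$ always holds; the factor $2$ comes from the initialization $\Phi_0 = \tfrac12\max_{v}\nabla f(x_0)(x_0-v)$ and is preserved because $\Phi_t$ only ever stays fixed or halves. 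This immediately gives the first term $\lceil\log(2\Phi_0/\varepsilon)\rceil$: once $\Phi_t \le \varepsilon/2$ we are done, and $\Phi_t$ can halve at most that many times from its start.

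Next I would separate iterations into (a) \textbf{gap steps} (Oracle~\ref{alg:weak-separate-oracle} returns \textsc{false}, $\Phi$ halves, $x$ unchanged), (b) \textbf{Frank–Wolfe steps} (oracle returns a vertex; line search), and (c) \textbf{simplex descent steps} (the test in Line~\ref{line:pgdIf} fires, so $\Sora$ is called, giving either a drop step or a descent step). The number of gap steps is controlled by the first-term argument, giving at most $\lceil\log(2\Phi_0/\varepsilon)\rceil$ of them in total. For the progress steps, the key estimate is that whenever a \emph{non-drop} step is taken, the function value decreases by at least a constant multiple of $\Phi_t^2/C^\Delta$: for a descent step this is exactly the oracle guarantee $f(x_t)-f(x_{t+1}) \ge [\max_{u,v\in S_t}\nabla f(x_t)(u-v)]^2/(4L_{f_{S_t}}) \ge \Phi_t^2/(4C^\Delta)$, using that the test in Line~\ref{line:pgdIf} was passed and $L_{f_{S_t}}\le C^\Delta$; for an FW step with line search, combining $\nabla f(x_t)(x_t-v_t)\ge\Phi_t/K$ with the curvature bound \eqref{eq:C}–\eqref{eq:CCD} and optimizing the step length yields a decrease of order $\Phi_t^2/(K^2 C^\Delta)$ when $\Phi_t \le$ (something times $KC^\Delta$), and a decrease of order $\Phi_t/K$ (a ``full step'') in the regime where $\Phi_t$ is large. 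The large-$\Phi$ regime is what contributes the middle term $8K\lceil\log(\Phi_0/(2KC^\Delta))\rceil$: in that regime each progress step shrinks $h_t$ by a fixed fraction, so few are needed per halving of $\Phi$, and there are $O(\log(\Phi_0/(KC^\Delta)))$ relevant scales.

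The main work — and the main obstacle — is handling drop steps, which make no guaranteed function-value progress, and bounding them by a counting argument: each drop step removes a vertex from the active set, and vertices enter $S_t$ only on FW steps (one at a time), so the total number of drop steps is at most the number of FW steps plus the initial size $|S_0|=1$. Hence it suffices to bound FW steps and descent steps, and then double. For these, within a phase where $\Phi_t$ is fixed at some value $\Phi$, each non-drop step decreases $h_t$ by $\ge \Phi^2/(4K^2C^\Delta)$ while $h_t \le 2\Phi$ throughout the phase, so the phase has at most $O(K^2C^\Delta/\Phi \cdot \Phi) = O(K^2 C^\Delta/\Phi \cdot \ldots)$ — more precisely, summing $h_t \le 2\Phi$ against a per-step decrease of $\Phi^2/(4K^2C^\Delta)$ gives $O(K^2C^\Delta/\Phi)$ non-drop steps \emph{only if} $h$ were reset to $2\Phi$ each phase; the cleaner route is to sum the decreases across \emph{all} phases with $\Phi \le 2KC^\Delta$ and use a telescoping/geometric-series argument over the $O(\log(KC^\Delta/\varepsilon))$ such phases, each of which contributes $O(K^2C^\Delta/\mu)$ steps via the geometric strong convexity Fact~\ref{fact:strongConvP} (which converts the dual-gap lower bound $\ge\Phi_t/K$ in Line~\ref{line:pgdIf}, or the FW vertex guarantee, into the linear-rate factor $\mu$). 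This yields the third term $\tfrac{64K^2C^\Delta}{\mu}\lceil\log(4KC^\Delta/\varepsilon)\rceil$. Assembling the three counts — gap steps, large-$\Phi$ progress steps, small-$\Phi$ progress steps, each at most doubled to absorb the matching drop steps — and being careful that the constants $8$, $64$ survive the doublings and the optimization of step lengths, gives the stated bound on $T$; the final $O(\cdot)$ form follows by absorbing the $\lceil\log\rceil$ terms into a single $\log(\Phi_0/\varepsilon)$.
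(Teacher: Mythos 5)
Your proposal follows essentially the same route as the paper's proof: epochs demarcated by gap steps, per-step progress bounds of order $\Phi_t^2/C^\Delta$ for descent steps and $\min\{\Phi_t/K,\Phi_t^2/(K^2C^\Delta)\}$ for Frank–Wolfe steps, drop steps charged against Frank–Wolfe steps, and the two regimes $\Phi_t \gtrless 2KC^\Delta$ with geometric strong convexity (Fact~\ref{fact:strongConvP}) supplying the $8\Phi_t^2/\mu$ budget per epoch in the small regime. One small correction to carry through when writing it up: the invariant is $f(x_t)-f(x^*)\le 2\Phi_t$ — true at each epoch start because the gap step certifies $\nabla f(x_t)(x_t-z)\le\Phi_{t-1}=2\Phi_t$ for all $z\in P$, and preserved within an epoch because function values are non-increasing while $\Phi$ is constant — rather than the claim that $\Phi_t$ bounds the dual gap itself, which can grow between gap steps.
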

For smooth but not necessarily strongly convex functions \(f\), the
algorithm ensures \(f(x_{T}) - f(x^{*}) \leq
\varepsilon\) after \(T=O(\max\{ C^{\Delta}, \Phi_{0} \} /
\varepsilon)\) iterations by a similar argument, which is omitted.
\begin{proof}
The proof tracks that of \cite{braun2016lazifying}.  We divide the
iteration sequence into epochs that are demarcated by the
\emph{gap steps}, that is, the iterations for which
the weak-separation oracle (Oracle~\ref{alg:weak-separate-oracle})
returns the value \textbf{false}, which results in $\Phi_t$ being halved
for the next iteration.
We then bound the number of iterates within each epoch.
The result is obtained by aggregating
across epochs.

We start by a well-known bound on the function value using the
Frank–Wolfe point
\begin{equation} \label{eq:et2}
  v^{FW}_t \coloneqq \argmin_{v \in P} \nabla f(x_t)v
\end{equation}
at iteration $t$, which follows from convexity:
\[
f(x_t) - f(x^*) \le \nabla f(x_t) (x_t-x^*) \le \nabla f(x_t) (x_t-v^{FW}_t).
\]
If iteration $t-1$ is a gap step, we have
using $x_t=x_{t-1}$ and $\Phi_t = \Phi_{t-1}/2$ that
\begin{equation} \label{eq:et3}
  f(x_t) - f(x^*) \le \nabla f(x_t) (x_t-v^{FW}_t) \le 2 \Phi_t.
\end{equation}
This bound also holds at $t=0$, by definition of $\Phi_0$. Thus
Algorithm~\ref{alg:LOLCG} is guaranteed to satisfy $f(x_T)-f(x^*) \le
\varepsilon$ at some iterate $T$ such that $T-1$ is a gap
step and $2 \Phi_T \le \varepsilon$. Therefore, the total number
of gap steps $\NPhi$ required to reach this
point satisfies
\begin{equation} \label{eq:Nneg}
 \NPhi \leq
  \left\lceil \log \frac{2\Phi_0}{\varepsilon} \right\rceil,
\end{equation}
which is also a bound on the total number of epochs. The next stage of
the proof finds bounds on the number of iterations of each type within
an individual epoch.

If iteration $t-1$ is a gap step, we have
$x_t=x_{t-1}$ and $\Phi_t = \Phi_{t-1}/2$, and because
the condition is false at Line \ref{line:pgdIf} of
Algorithm~\ref{alg:LOLCG}, we have
\begin{equation} \label{eq:et5}
  \nabla f(x_t)(v^A_t-x_t) \le \nabla f(x_t) (v^A_t - v^{FW-S}_t) \le 2 \Phi_t.
\end{equation}
This condition also holds trivially at $t=0$, since $v^A_0 =
v^{FW-S}_0=x_0$. By summing \eqref{eq:et3} and \eqref{eq:et5}, we obtain
\[
\nabla f(x_t)(v^A_t-v^{FW}_t) \leq 4 \Phi_t,
\]
so it follows from Fact~\ref{fact:strongConvP} that
\[
f(x_t)-f(x^*) \le \frac{[\nabla f(x_t)(v^A_t-v^{FW}_t)]^2}{2 \mu} \le \frac{8 \Phi_t^2}{\mu}.
\]
By combining this inequality with \eqref{eq:et3}, we obtain
\begin{equation} \label{eq:et7}
  f(x_t)-f(x^*) \le \min \left\{ 8 \Phi_t^2/\mu, 2 \Phi_t \right\}
\end{equation}
for all $t$ such that either $t=0$ or else $t-1$ is a gap
step.  In fact, \eqref{eq:et7} holds for \emph{all} $t$, because
\begin{enumerate*}[itemjoin={{; }}, itemjoin*={{; and }}]
\item
  the sequence of function values $\{f(x_s)\}_s$ is non-increasing
\item $\Phi_s = \Phi_t$ for all $s$ in the epoch that starts at iteration $t$
\end{enumerate*}

We now consider the epoch that starts at iteration $t$, and use $s$ to
index the iterations within this epoch. Note that $\Phi_s =
\Phi_t$ for all $s$ in this epoch.

We distinguish three types of iterations besides gap
step.  The first type is a \emph{Frank–Wolfe} step,
taken when the weak-separation oracle returns an
improving vertex $v_s \in P$ such that $\nabla f(x_s)(x_s-v_s) \ge
\Phi_s/K = \Phi_t/K$ (Line \ref{line:line-search}).  Using the
definition of curvature $C$, we have by standard Frank–Wolfe arguments
that
(c.f., \cite{braun2016lazifying}).
\begin{equation}
  \label{eq:FW-step}
  f(x_s) - f(x_{s+1})
  \geq
  \frac{\Phi_s}{2K} \min \left\{ 1, \frac{\Phi_s}{KC} \right\}
  \geq
  \frac{\Phi_t}{2K} \min \left\{ 1, \frac{\Phi_t}{2KC^\Delta} \right\}
  ,
\end{equation}
where we used $\Phi_s=\Phi_t$ and $C \le 2C^\Delta$ (from
\eqref{eq:CCD}).  We denote by $N_{\text{FW}}^t$ the number of
Frank–Wolfe iterations in the epoch starting at iteration
$t$.

The second type of iteration is a \emph{descent step}, in which
\Sora (Line \ref{line:Sora}) returns a point $x_{s+1}$ that lies in
the relative interior of $\conv S_s$ and with strictly smaller
function value.
We thus have $S_{s+1} = S_s$ and, by the definition of \Sora, together with
\eqref{eq:CDel},
it follows that
\begin{equation}
  \label{eq:gradient-step}
  f(x_s) - f(x_{s+1})
  \geq
  \frac{[\nabla f(x_s) (v^A_s - v^{FW-S}_s)]^2}{4 C^\Delta}
  \geq
  \frac{\Phi_s^2}{4C^\Delta}
  =
  \frac{\Phi_t^2}{4C^\Delta}
  .
\end{equation}
We denote by $\Nd^t$ the number of descent steps
that take place in the epoch that starts at iteration $t$.

The third type of iteration is one in which Oracle~\ref{ora:simplex}
returns a point $x_{s+1}$ lying on a face of the convex hull of
$S_s$, so that $S_{s+1}$ is strictly smaller than $S_s$. Similarly to
the Away-step Frank–Wolfe algorithm of \cite{FW-converge2015}, we call
these steps \emph{drop steps}, and denote by $N^t_{\text{drop}}$ the
number of such steps that take place in the epoch that starts at
iteration $t$.  Note that since $S_s$ is expanded only at Frank–Wolfe
steps, and then only by at most one element, the \emph{total} number
of drop steps across the whole algorithm cannot exceed the total
number of Frank–Wolfe steps.  We use this fact and \eqref{eq:Nneg} in
bounding the total number of iterations $T$ required for
$f(x_T)-f(x^*) \le \varepsilon$:
\begin{equation}
  \label{eq:rounds}
  T
  \leq
 \NPhi  + \Nd + N_{\text{FW}} + N_{\text{drop}}
  \leq
  \left\lceil
    \log \frac{2\Phi_0}{\varepsilon}
  \right\rceil
  + \Nd + 2 N_{\text{FW}}
  =
  \left\lceil
    \log \frac{2\Phi_0}{\varepsilon}
  \right\rceil
  + \sum_{t: \text{epoch start}}
  (\Nd^{t} + 2 N^{t}_{\text{FW}})
  .
\end{equation}
Here $\Nd$
denotes the total number of descent steps,
$N_{\text{FW}}$ the total number of Frank–Wolfe steps, and
$N_{\text{drop}}$ the total number of drop steps, which is
bounded by $N_{\text{FW}}$, as just discussed.

Next, we seek bounds on the iteration counts $\Nd^t$ and $N^t_{\text{FW}}$
within the epoch starting with iteration $t$.
For the total decrease in function value during the epoch,
Equations \eqref{eq:FW-step} and \eqref{eq:gradient-step}
provide a lower bound, while \(f(x_{t}) - f(x^{*})\) is an obvious
upper bound, leading to the following estimate using \eqref{eq:et7}.
\begin{itemize}
  \item
If $\Phi_t \ge 2KC^\Delta$ then
\begin{equation}
  \label{eq:progress-far-computation}
  2 \Phi_{t}
  \geq
  f(x_{t}) - f(x^{*})
  \geq
  \Nd^t \frac{\Phi_t^2}{4C^\Delta}
  +
  N^t_{\text{FW}} \frac{\Phi_t}{2K}
  \geq
  \Nd^t \frac{\Phi_t K}{2}
  +
  N^t_{\text{FW}} \frac{\Phi_t}{2K}
  \geq
  (\Nd^t + 2 N^t_{\text{FW}}) \frac{\Phi_t}{4 K}
  ,
\end{equation}
hence
\begin{equation}
  \label{eq:progress-far}
  \Nd^t + 2 N^t_{\text{FW}}
  \leq
  8 K
  .
\end{equation}

\item
  If $\Phi_t < 2K C^\Delta$, a similar argument
provides
\begin{equation}
  \label{eq:progress-near-computation}
  \frac{8 \Phi_{t}^{2}}{\mu}
  \geq
  f(x_{t}) - f(x^{*})
  \geq
  \Nd^t \frac{\Phi_t^2}{4C^\Delta}
  +
  N^t_{\text{FW}} \frac{\Phi_t^2}{4K^2 C^\Delta}
  \geq
  (\Nd^t + 2 N^t_{\text{FW}}) \frac{\Phi_t^2}{8 K^2 C^\Delta}
  ,
\end{equation}
leading to
\begin{equation}
  \label{eq:progress-near}
  \Nd^t + 2 N^t_{\text{FW}}
  \leq
  \frac{64 K^{2} C^{\Delta}}{\mu}
  .
\end{equation}
\end{itemize}
There are at most
\begin{align*}
  &
  \left\lceil \log \frac{\Phi_0}{2KC^\Delta} \right\rceil
  \text{epochs in the regime with \(\Phi_{t} \geq 2 K C^{\Delta}\)}, \\
  &
  \left\lceil \log \frac{2KC^\Delta}{\varepsilon/2} \right\rceil
  \text{epochs in the regime with \(\Phi_{t} < 2 K C^{\Delta}\)}.
\end{align*}

Combining \eqref{eq:rounds} with the bounds
\eqref{eq:progress-far} and \eqref{eq:progress-near}
on $N_{\text{FW}}^{t}$ and $\Nd^{t}$,
we obtain
\eqref{eq:pgd-FW}.
\end{proof}

\section{Simplex Gradient Descent}
\label{sec:proj-grad-desc-projfree}

Here we describe the Simplex Gradient Descent approach
(Algorithm~\ref{alg:simplex-descent}), an implementation of \Sora
(Oracle~\ref{ora:simplex}).
Algorithm~\ref{alg:simplex-descent} requires only $O(\size{S})$
operations beyond the evaluation of $\nabla f(x)$ and the cost of line
search. (It is assumed that \(x\) is represented as a convex
combination of vertices of $P$, which is updated during
Oracle~\ref{ora:simplex}.)  Apart from the (trivial) computation of
the projection of $\nabla f(x)$ onto the linear space spanned by
$\Delta^k$, no projections are computed. Thus,
Algorithm~\ref{alg:simplex-descent} is typically faster even than a
Frank–Wolfe step (LP oracle call), for typical small sets \(S\).

Alternative implementations of Oracle~\ref{ora:simplex} are described
in Section~\ref{sec:ora-alt}.  Section~\ref{sec:example-simplex}
describes the special case in which $P$ itself is a probability
simplex, combining BCG and its oracles into a single, simple
method with better constants in the convergence bounds.

In the algorithm,
the form \(c \allOne\) denotes the scalar product of \(c\) and
\(\allOne\), i.e., the sum of entries of \(c\).

\begin{algorithm}
  \caption{Simplex Gradient Descent Step (SiGD)}
  \label{alg:simplex-descent}
  \begin{algorithmic}[1]
    \REQUIRE
      polyhedron $P$, smooth convex function \(f \colon P \to \mathbb{R}\), subset $S = \{v_1,v_2,\dotsc,v_k \}$ of vertices of $P$,
      point \(x \in \conv S\)
    \ENSURE set \(S' \subseteq S\),
      point \(x' \in \conv S'\)
    \STATE Decompose \(x\) as a convex combination
      \(x = \sum_{i=1}^{k} \lambda_{i} v_{i}\), with $\sum_{i=1}^k \lambda_i=1$ and $\lambda_i \ge 0$, $i=1,2,\dotsc,k$
      \label{line:simplex-decompose}
    \STATE \(c \leftarrow
      [\nabla f(x) v_{1}, \dotsc, \nabla f(x) v_{k}]\)
      \COMMENT{\(c = \nabla f_{S}(\lambda)\); see \eqref{eq:def.fS}}
    \STATE \(d \leftarrow c - (c \allOne) \allOne / k\)
      \COMMENT{Projection onto the hyperplane of $\Delta^k$}
      \label{line:projected-gradient}
    \IF{\(d = 0\)}
      \RETURN \(x' = v_{1}\), \(S' = \{v_{1}\}\)
        \COMMENT{Arbitrary vertex}
    \ENDIF
    \STATE \(\eta \leftarrow
      \max \{\eta \geq 0 : \lambda - \eta d \geq 0\}\)
    \STATE \(y \leftarrow x - \eta \sum_{i} d_{i} v_{i}\)
      \label{line:simplex-boundary}
    \IF{\(f(x) \geq f(y)\)} \label{line:accept-drop}
      \STATE \(x' \leftarrow y\)
      \STATE Choose \(S' \subseteq S\), \(S' \neq S\) with
        \(x' \in \conv S'\).
        \label{line:drop}
    \ELSE
      \STATE \(x' \leftarrow \argmin_{z \in [x, y]} f(z)\)
        \label{line:simplex-line-search}
      \STATE \(S' \leftarrow S\)
    \ENDIF
    \RETURN \(x'\), \(S'\)
  \end{algorithmic}
\end{algorithm}

To verify the validity of Algorithm~\ref{alg:simplex-descent} as an
implementation of Oracle~\ref{ora:simplex},
note first that since \(y\)
lies on a face of \(\conv S\) by definition, it is always possible to
choose a proper subset \(S' \subseteq S\) in Line~\ref{line:drop}, for
example, \(S' \coloneqq \{v_{i} : \lambda_{i} > \eta d_{i}\}\).  The
following lemma shows that with the choice \(h \coloneqq f_{S}\),
Algorithm~\ref{alg:simplex-descent} correctly implements
Oracle~\ref{ora:simplex}.

\begin{lemma}
  \label{lem:simplex-descent-step}
  Let $\Delta^k$ be the probability simplex in $k$ dimensions and
  $h \colon \Delta^k \to \R$ be an $L_h$-smooth
  function. Given some $\lambda \in
  \Delta^k$, define $d \coloneqq \nabla h(\lambda) - (\nabla h(\lambda)
  \allOne/k) \allOne$ and let $\eta \ge 0$ be the largest value for
  which $\tau \coloneqq \lambda - \eta d \ge 0$. Let $\lambda' \coloneqq \argmin_{z
    \in [\lambda,\tau]} \, h(z)$. Then either $h(\lambda) \ge h(\tau)$
  or
  \[
    h(\lambda) - h(\lambda') \ge
    \frac{[\max_{1 \leq i,j \leq k} \nabla h(\lambda)(e_i-e_j)]^2}{4L_h}.
  \]
\end{lemma}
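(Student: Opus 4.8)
The plan is to treat this as a one-step gradient-descent estimate on the simplex, where the "gradient" we descend along is $d$, the projection of $\nabla h(\lambda)$ onto the hyperplane $\{\sum_i z_i = 0\}$ (the linear space parallel to $\Delta^k$). The key point is that $d$ is a feasible descent direction at $\lambda$ in the sense that moving from $\lambda$ to $\lambda - \gamma d$ stays on the affine hull of $\Delta^k$ for all $\gamma$, and stays in $\Delta^k$ itself for $\gamma \in [0,\eta]$. Assuming we are not in the drop case (so $h(\lambda) < h(\tau)$, and in particular $d \neq 0$), I would use $L_h$-smoothness of $h$ along the segment $[\lambda, \lambda - \gamma d]$ to write
\[
  h(\lambda - \gamma d) \le h(\lambda) - \gamma \,\nabla h(\lambda) d + \frac{L_h \gamma^2}{2}\norm{d}^2 ,
\]
and then observe that $\nabla h(\lambda) d = \nabla h(\lambda)(d + c\allOne)$ for any scalar $c$ since $d$ is orthogonal to $\allOne$; choosing $c$ appropriately, $\nabla h(\lambda) d = d \cdot d = \norm{d}^2$ because $d$ is exactly the component of $\nabla h(\lambda)$ orthogonal to $\allOne$. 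So the bound becomes $h(\lambda - \gamma d) \le h(\lambda) - \gamma\norm{d}^2 + \tfrac12 L_h \gamma^2 \norm{d}^2$.

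Next I would optimize over the step length. The unconstrained minimizer of the quadratic in $\gamma$ is $\gamma^\star = 1/L_h$, giving decrease $\norm{d}^2/(2L_h)$. If $\eta \ge 1/L_h$, then $\lambda - \gamma^\star d \in [\lambda,\tau]$, so $\lambda'$ does at least as well and we get $h(\lambda) - h(\lambda') \ge \norm{d}^2/(2L_h)$. If instead $\eta < 1/L_h$, the quadratic is still decreasing at $\gamma = \eta$, so taking the endpoint $\tau = \lambda - \eta d$ gives $h(\lambda) - h(\tau) \ge \eta\norm{d}^2 - \tfrac12 L_h\eta^2\norm{d}^2 = \eta\norm{d}^2(1 - \tfrac12 L_h\eta) \ge \tfrac12\eta\norm{d}^2 > 0$; but this contradicts our assumption $h(\lambda) < h(\tau)$, so in the non-drop case we are necessarily in the regime where $\lambda'$ achieves the full $\norm{d}^2/(2L_h)$ decrease. (Alternatively one can absorb this into the statement: either the drop case holds, or $h(\lambda) - h(\lambda') \ge \norm{d}^2/(2L_h)$.)

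The last step is to convert $\norm{d}^2$ into the pairwise-gap quantity $[\max_{i,j} \nabla h(\lambda)(e_i - e_j)]^2$. Since $d = \nabla h(\lambda) - \bar c\,\allOne$ with $\bar c$ the mean of the entries of $\nabla h(\lambda)$, we have $d_i - d_j = \nabla h(\lambda)(e_i - e_j)$, so $\max_{i,j}(d_i - d_j) = \max_i d_i - \min_i d_j$; call this $g$. Because $\sum_i d_i = 0$, the vector $d$ has both nonnegative and nonpositive entries, so $\max_i d_i \ge g/2 \ge 0 \ge \min_i d_i \ge -g/2$... more carefully, $\max_i d_i - \min_i d_i = g$ with $\sum d_i = 0$ forces $\max_i |d_i| \ge g/2$, hence $\norm{d}^2 \ge (\max_i d_i)^2 + (\min_i d_i)^2 \ge \tfrac12 g^2$ (since for two numbers $a \ge 0 \ge b$ with $a - b = g$ we have $a^2 + b^2 \ge g^2/2$). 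Therefore $\norm{d}^2/(2L_h) \ge g^2/(4L_h)$, which is exactly the claimed bound. The main obstacle is making the last normalization step clean and tight — in particular verifying that the constant $4$ (not $8$) is correct, which hinges on the $a^2 + b^2 \ge g^2/2$ inequality for $a - b = g$ combined with the factor $2$ from the smoothness optimization; the rest is routine.
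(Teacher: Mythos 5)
Your proposal is correct and follows essentially the same route as the paper's proof: a single projected-gradient step along $d$ using $\nabla h(\lambda)\,d = \norm{d}^2$, the case split on whether $\eta \geq 1/L_h$ (descent case) or $\eta < 1/L_h$ (drop case), and the conversion $\norm{d}^2 \geq \tfrac12\bigl[\max_{i,j}\nabla h(\lambda)(e_i-e_j)\bigr]^2$ yielding the constant $4$. The only cosmetic differences are that the paper packages the step as gradient descent on the auxiliary function $g = h\circ\pi$ and obtains the last inequality via Cauchy--Schwarz with the $\ell_2$-diameter $\sqrt{2}$ of $\Delta^k$, whereas you argue directly on the segment and use the elementary estimate $a^2+b^2 \geq (a-b)^2/2$; both give the same bound.
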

\begin{proof}
Let \(\pi\) denote the orthogonal projection onto
the lineality space of \(\Delta^{k}\),
i.e., \(\pi(\zeta) \coloneqq \zeta - (\zeta \allOne) \allOne / k\).
Let \(g(\zeta) \coloneqq h(\pi(\zeta))\),
then \(\nabla g(\zeta) = \pi (\nabla h(\pi(\zeta)))\),
and \(g\) is clearly \(L_{h}\)-smooth, too.
In particular, \(\nabla g(\lambda) = d\).

From standard gradient descent bounds, not repeated here,
we have the following
inequalities, for \(\gamma \leq \min\{\eta, 1 / L_{h}\}\):
\begin{equation}
  \label{eq:tk3}
 \begin{split}
  h(\lambda) - h(\lambda - \gamma d)
  &
  =
  g(\lambda) - g(\lambda - \gamma \nabla g(\lambda))
  \geq
  \gamma
  \frac{\norm[2]{\nabla g(\lambda)}^{2}}{2}
  \\
  &
  \geq
  \gamma
  \frac{[\max_{1 \leq i, j \leq k}
    \nabla g(\lambda) (e_{i} - e_{j})]^{2}}{4}
  =
  \gamma
  \frac{[\max_{1 \leq i, j \leq k}
    \nabla h(\lambda) (e_{i} - e_{j})]^{2}}{4}
  ,
 \end{split}
\end{equation}
where the second inequality uses that the \(\ell_{2}\)-diameter of the
\(\Delta^{k}\) is \(2\), and the last equality follows from
\(\nabla g(\lambda) (e_{i} - e_{j})
= \nabla h(\lambda) (e_{i} - e_{j})\).

When $\eta \ge 1/L_h$, we conclude that
$h(\lambda') \le h(\lambda - (1/L_h)d) \leq h(\lambda)$, hence
  \[
    h(\lambda)-h(\lambda') \ge \frac{[\max_{i,j \in \{1,2,\dotsc,k\}}
      \nabla h(\lambda)(e_i-e_j)]^2}{4 L_h},
  \]
  which is the second case of the lemma.
  When $\eta < 1/L_h$, then
  setting $\gamma = \eta$ in \eqref{eq:tk3} clearly provides
  \(h(\lambda) - h(\tau) \geq 0\),
  which is the first case of the lemma.
\end{proof}

\subsection{Alternative implementations of Oracle~\ref{ora:simplex}}
\label{sec:ora-alt}

Algorithm~\ref{alg:simplex-descent} is probably the least expensive
possible implementation of Oracle~\ref{ora:simplex}, in general. We
may consider other implementations, based on
projected gradient descent, that aim to decrease $f$ by a greater amount in
each step and possibly make more extensive reductions to the set
$S$. \emph{Projected gradient descent} would seek to minimize $f_S$ along the piecewise-linear path
\(\{\operatorname{proj}_{\Delta^k}(\lambda - \gamma \nabla
f_S(\lambda)) \mid \gamma \geq 0 \}\),
where \(\operatorname{proj}_{\Delta^k}\) denotes projection
onto \(\Delta^{k}\).
Such a search is more expensive, but may
  result in a new active set $S'$ that is significantly smaller
  than the current set $S$ and, since the reduction in $f_S$ is at
  least as great as the reduction on the interval $\gamma \in
  [0,\eta]$ alone, it also satisfies the requirements of
  Oracle~\ref{ora:simplex}.

  More advanced methods for optimizing over the simplex could also be
  considered, for example, mirror descent
  (see \cite{nemirovskii1983problem}) and accelerated versions of mirror
  descent and projected gradient descent; see \cite{Lan2017-MLnotes} for a
  good overview. The effects of these alternatives on the overall
  convergence rate of Algorithm~\ref{alg:LOLCG} has not been studied;
  the analysis is complicated significantly by the lack of guaranteed improvement in
  each (inner) iteration.

  The accelerated versions are considered in the computational tests
  in Section~\ref{sec:comp-results}, but on the examples we
  tried, the inexpensive implementation of
  Algorithm~\ref{alg:simplex-descent} usually gave the fastest overall
  performance.
  We have not tested mirror descent versions.

\subsection{Simplex Gradient Descent as a stand-alone algorithm}
\label{sec:example-simplex}

We describe a variant of Algorithm~\ref{alg:LOLCG} for the special
case in which $P$ is the probability simplex \(\Delta^{k}\). Since
optimization of a linear function over $\Delta^k$ is trivial, we use
the standard LP oracle in place of the weak-separation oracle
(Oracle~\ref{alg:weak-separate-oracle}), resulting in the non-lazy
variant Algorithm~\ref{alg:sigd}.  Observe that the per-iteration cost
is only \(O(k)\). In cases of \(k\) very large, we could also
formulate a version of Algorithm~\ref{alg:sigd} that uses a
weak-separation oracle (Oracle~\ref{alg:weak-separate-oracle}) to
evaluate only a subset of the coordinates of the gradient, as in
coordinate descent. The resulting algorithm would be an interpolation
of Algorithm~\ref{alg:sigd} below and Algorithm~\ref{alg:LOLCG};
details are left to the reader.

\begin{algorithm}
  \caption{Stand-Alone Simplex Gradient Descent}
  \label{alg:sigd}
  \begin{algorithmic}[1]
    \REQUIRE
      convex function \(f\)
    \ENSURE points \(x_{t}\) in \(\Delta^{k}\) for \(t=1, \dots, T\)
    \STATE \(x_{0} = e_{1}\)
    \FOR{\(t=0\) \TO \(T-1\)}
      \STATE \(S_t \leftarrow \{i \colon x_{t, i} > 0\}\)
      \STATE \(a_t \leftarrow \argmax_{i \in S_t}
        \nabla f(x_t)_{i}\)
      \STATE \(s_t \leftarrow \argmin_{i \in S_t}
        \nabla f(x_t)_{i}\)
      \STATE \(w_t \leftarrow \argmin_{1 \leq i \leq k} \nabla f(x_t)_{i}\)
      \IF{\(\nabla f(x_{t})_{a_t} - \nabla f(x_{t})_{s_t} >
        \nabla f(x_{t}) x_{t} - \nabla f(x_{t})_{w_t}\)}
      \label{line:condition}
        \STATE \(d_{i} =
          \begin{cases}
            \nabla f(x_t)_{i}
            - \sum_{j \in S} \nabla f(x_t)_{j} / \size{S_t}
            & i \in S_t \\
            0 & i \notin S_t
          \end{cases}\)
          \qquad
          for \(i=1, 2, \dotsc, k\)
        \STATE \(\eta = \max \{\gamma : x_{t} - \gamma d \geq 0\}\)
        \COMMENT{ratio test}
        \STATE \(y = x_{t} - \eta d\)
        \IF{\(f(x_{t}) \geq f(y)\)}
          \STATE \(x_{t+1} \leftarrow y\)
        \COMMENT{drop step}
            \label{line:sigd-drop}
        \ELSE
          \STATE \(x_{t+1} \leftarrow
           \argmin_{x \in [x_{t}, y]} f(x)\)
            \label{line:sigd-gd}
           \COMMENT{descent step}
        \ENDIF
      \ELSE
        \STATE \(x_{t+1} \leftarrow \argmin_{x \in[x, e_{w_t}]} f(x)\)
          \label{line:sigd-FW}
        \COMMENT{FW step}
      \ENDIF
    \ENDFOR
  \end{algorithmic}
\end{algorithm}

When line search is too expensive, one might replace
Line~\ref{line:sigd-gd} by \(x_{t+1} = (1 - 1 / L_{f}) x_{t} + y /
L_{f}\), and Line~\ref{line:sigd-FW} by \(x_{t+1} = (1 - 2 / (t + 2))
x_{t} + (2 / (t + 2)) e_{w}\).  These employ the standard step sizes
for the Frank–Wolfe algorithm and (projected) gradient descent, respectively, and
yield the required descent guarantees.

We now describe convergence rates for Algorithm~\ref{alg:sigd}, noting
that better constants are available in the convergence rate expression
than those obtained from a direct application of
Theorem~\ref{thm:pgd-FW}.

\begin{corollary}
  \label{cor:sigd}
  Let \(f\) be an \(\alpha\)-strongly convex and \(L_{f}\)-smooth
  function over the probability simplex \(\Delta^{k}\) with $k \ge 2$.
  Let \(x^{*}\) be a minimum point of \(f\) in \(\Delta^{k}\).  Then
  Algorithm~\ref{alg:sigd} converges with rate
  \begin{equation}
    \label{eq:sigd}
    f(x_{T}) - f(x^{*})
    \leq
    \left(
      1 - \frac{\alpha}{4 L_{f} k}
    \right)^{T}
    \cdot
    \left(f(x_{0}) - f(x^{*})\right),
    \quad
    T = 1,2,\dotsc.
  \end{equation}
  If \(f\) is not strongly convex (that is, \(\alpha = 0\)), we have
  \begin{equation}
    \label{eq:sigd-smooth}
    f(x_{T}) - f(x^{*})
    \leq
    \frac{8 L_{f}}{T}, \quad T=1,2,\dotsc.
  \end{equation}
\begin{proof}
  The structure of the proof is similar to that of
  \cite[Theorem~8]{FW-converge2015}. Recall from
  \cite[§B.1]{FW-converge2015} that the pyramidal width of the
  probability simplex is \(W \geq 2 / \sqrt{k}\), so that the geometric
  strong convexity of \(f\) is \(\mu \geq 4 \alpha / k\).  The
  diameter of \(\Delta^{k}\) is \(D = \sqrt{2}\), and it is easily
  seen that \(C^{\Delta} = L_{f}\) and \(C \leq L_{f} D^{2} / 2 =
  L_{f}\).

To maintain the same notation as in the proof of
Theorem~\ref{thm:pgd-FW}, we define \(v_t^{A} = e_{a_t}\),
\(v^{FW-S}_t = e_{s_t}\) and \(v^{FW}_t = e_{w_t}\). In particular, we
have \(\nabla f(x_{t})_{w_t} = \nabla f(x_{t}) v^{FW}_t\),
\(\nabla f(x_{t})_{s_t} = \nabla f(x_{t}) v^{FW-S}_t\), and \(\nabla
f(x_{t})_{a_t} = \nabla f(x_{t}) v^{A}_t\).  Let \(h_{t} \coloneqq
f(x_{t}) - f(x^{*})\).

In the proof, we use several elementary estimates. First, by convexity
of $f$ and the definition of the Frank–Wolfe step, we have
\begin{equation} \label{eq:hz6}
  h_t = f(x_t)-f(x^*) \le \nabla f(x_t)(x_t-v^{FW}_t).
\end{equation}
Second, by Fact~\ref{fact:strongConvP}
and the estimate $\mu \geq 4 \alpha / k$ for
geometric strong convexity, we obtain
\begin{equation} \label{eq:hz8}
  h_t \le \frac{[\nabla f(x_t)(v^A_t-v^{FW}_t)]^2}{8 \alpha/k}.
  \end{equation}

Let us consider a fixed iteration \(t\). Suppose first that we take a
descent step (Line~\ref{line:sigd-gd}),
in particular, \(\nabla f(x_{t}) (v^{A}_{t} - v^{FW-S}_{t}) \geq
\nabla f(x_{t}) (x_{t} - v^{FW}_{t})\) from Line~\ref{line:condition} which,
together with \(\nabla f(x_{t}) x_{t} \geq \nabla f(x_{t}) v^{FW-S}\), yields
\begin{equation}
  \label{eq:hz7}
  2 \nabla f(x_{t}) (v^{A}_{t} - v^{FW-S}_{t}) \geq
  \nabla f(x_{t}) (v^{A}_{t} - v^{FW}_{t})
  .
\end{equation}
By Lemma~\ref{lem:simplex-descent-step}, we have
\begin{equation}
  \label{eq:sigd-gd}
  f(x_{t}) - f(x_{t+1})
  \geq
  \frac{\left[
      \nabla f(x_{t}) (v^{A} - v^{FW-S})
    \right]^{2}
  }{4 L_{f}}
  \geq
  \frac{\left[
      \nabla f(x_{t}) (v^{A} - v^{FW})
    \right]^{2}
  }{16 L_{f}}
  \geq
  \frac{\alpha}{2 L_{f} k} \cdot h_{t},
\end{equation}
where the second inequality follows from \eqref{eq:hz7} and the third
inequality follows from \eqref{eq:hz8}.

If a Frank–Wolfe step is taken (Line~\ref{line:sigd-FW}), we have
similarly to \eqref{eq:FW-step}
\begin{equation}
  \label{eq:sigd-FW-step}
  f(x_{t}) - f(x_{t+1})
  \geq
  \frac{\nabla f(x_{t}) (x_{t} - v^{FW})}{2}
  \min \left\{
    1, \frac{\nabla f(x_{t}) (x_{t} - v^{FW})}{2 L_{f}}
  \right\}
  .
\end{equation}
Combining with \eqref{eq:hz6}, we have either
$f(x_t)-f(x_{t+1}) \ge h_t/2$ or
\begin{equation}
  \label{eq:2}
  f(x_{t}) - f(x_{t+1})
  \geq
  \frac{[\nabla f(x_{t}) (x_{t} -  v^{FW})]^{2}}{4 L_{f}}
  \geq
  \frac{\left[
      \nabla f(x_{t}) (v^{A} - v^{FW})
    \right]^{2}
  }{16 L_{f}}
  \geq
  \frac{\alpha}{2 L_{f} k} \cdot h_{t}
  .
\end{equation}
Since \(\alpha \leq L_{f}\), the latter is always smaller than the
former, and hence is a lower bound that holds for all Frank–Wolfe steps.

Since \(f(x_{t}) - f(x_{t+1}) = h_{t} - h_{t+1}\), we have
\(h_{t+1} \leq (1 - \alpha / (2 L_{f} k)) h_{t}\) for descent steps
and Frank–Wolfe steps, while obviously \(h_{t+1} \leq h_{t}\)
for drop steps (Line~\ref{line:sigd-drop}). For any given iteration
counter $T$, let $\Td$ be the number of descent steps taken
before iteration $T$, $T_{\text{FW}}$ be the number of Frank–Wolfe
steps taken  before iteration $T$, and $T_{\text{drop}}$ be the number
of drop steps taken  before iteration $T$. We have $T_{\text{drop}} \le
T_{\text{FW}}$, so that similarly to \eqref{eq:rounds}
\begin{equation} \label{eq:hz9}
  T = \Td + T_{\text{FW}} + T_{\text{drop}}
  \leq  \Td + 2 T_{\text{FW}}
  .
\end{equation}
By compounding the decrease at each iteration, and using
\eqref{eq:hz9} together with the identity $(1-\epsilon/2)^2 \ge
(1-\epsilon)$ for any $\epsilon \in (0,1)$, we have
\begin{equation*}
  \label{eq:9}
  h_{T}
  \leq
  \left(
    1 - \frac{\alpha}{2 L_{f} k}
  \right)^{\Td + T_{\text{FW}}}
  h_{0}
  \leq
  \left(
    1 - \frac{\alpha}{2 L_{f} k}
  \right)^{T / 2}
  h_{0}
  \leq
  \left(
    1 - \frac{\alpha}{4 L_{f} k}
  \right)^{T}
  \cdot
  h_{0}
  .
\end{equation*}

The case for the smooth but not strongly convex functions is similar:
we obtain for descent steps
\begin{equation}
  \label{eq:sigd-smooth-gd}
  h_{t} - h_{t+1}
  =
  f(x_{t}) - f(x_{t+1})
  \geq
  \frac{\left[
      \nabla f(x_{t}) (v^{A} - v^{FW-S})
    \right]^{2}
  }{4 L_{f}}
  \geq
  \frac{\left[
      \nabla f(x_{t}) (x - v^{FW})
    \right]^{2}
  }{4 L_{f}}
  \geq
  \frac{h_{t}^{2}}{4 L_{f}},
\end{equation}
where the second inequality follows from \eqref{eq:hz6}.

For Frank–Wolfe steps, we have by standard estimations
\begin{equation}
  \label{eq:sigd-FW}
  h_{t+1} \le
  \begin{cases}
    h_t -h_t^2/(4L_f) & \text{if $h_t \le 2L_f$,} \\
    L_{f} \leq h_{t} / 2 & \text{otherwise.}
  \end{cases}
\end{equation}

Given an iteration $T$, we define $T_{\text{drop}}$, $T_{\text{FW}}$
and $\Td$ as above, and show by induction that
\begin{equation} \label{eq:ht8}
  h_T \le \frac{4L_f}{\Td +T_{\text{FW}}}
  \quad
  \text{for $T \ge 1$.}
\end{equation}
Equation \eqref{eq:ht8}, i.e., \(h_{T} \leq 8 L_{f} / T\)
easily follows from this via \(T_{\text{drop}} \leq T_{\text{FW}}\).
Note that the first step is necessarily a Frank–Wolfe step,
hence the denominator is never \(0\).

If iteration \(T\) is a drop step, then \(T > 1\),
and the claim is obvious by induction from \(h_{T} \geq h_{T - 1}\).
Hence we assume that iteration \(T\) is either a descent step or a
Frank–Wolfe step.
If \(\Td + T_{\text{FW}} \leq 2\) then by
\eqref{eq:sigd-smooth-gd} or \eqref{eq:sigd-FW} we obtain either
\(h_{T} \leq L_{f} < 2 L_{f}\)
or \(h_{T} \leq h_{T-1} - h_{T-1}^{2} / (4 L_{f}) \leq 2 L_{f}\),
without using any upper bound on \(h_{T-1}\), proving \eqref{eq:ht8}
in this case.
Note that this includes the case \(T=1\),
the start of the induction.

Finally, if \(\Td + T_{\text{FW}} \geq 3\),
then \(h_{T-1} \leq 4 L_{f} / (\Td + T_{\text{FW}} - 1) \leq
2 L_{f}\) by induction,
therefore a familiar argument
using \eqref{eq:sigd-smooth-gd} or \eqref{eq:sigd-FW} provides
\[
h_{T} \le \frac{4L_f}{\Td+T_{\text{FW}} - 1}
-\frac{4L_f}{(\Td+T_{\text{FW}} - 1)^2}
 \le
\frac{4L_f}{\Td+T_{\text{FW}}},
\]
proving \eqref{eq:ht8} in this case, too,
finishing the proof.
\end{proof}
\end{corollary}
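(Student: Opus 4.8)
The plan is to specialize the step-type bookkeeping of Theorem~\ref{thm:pgd-FW} to the probability simplex, where the relevant geometric quantities are explicit, and to drop the lazy gap-estimate machinery since minimizing a linear function over $\Delta^{k}$ is trivial (so no $\Phi_t$ and no gap steps are needed). First I would record the constants: the pyramidal width of $\Delta^{k}$ is $W \ge 2/\sqrt{k}$, so by Fact~\ref{fact:strongConvP} the geometric strong convexity is $\mu \ge 4\alpha/k$; the $\ell_2$-diameter is $D = \sqrt{2}$; and $C^{\Delta} = L_f$ (with $C \le L_f D^2/2 = L_f$). Then I would rename $v^{A}_t = e_{a_t}$, $v^{FW-S}_t = e_{s_t}$, $v^{FW}_t = e_{w_t}$, set $h_t \coloneqq f(x_t) - f(x^{*})$, and isolate the two workhorse one-line inequalities: $h_t \le \nabla f(x_t)(x_t - v^{FW}_t)$ from convexity, and $h_t \le [\nabla f(x_t)(v^{A}_t - v^{FW}_t)]^{2}/(8\alpha/k)$ from Fact~\ref{fact:strongConvP} together with $\mu \ge 4\alpha/k$.

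Next I would bound the per-step progress for each of the three step types. For a descent step, the branch test in Line~\ref{line:condition} gives $\nabla f(x_t)(v^{A}_t - v^{FW-S}_t) \ge \nabla f(x_t)(x_t - v^{FW}_t)$, and combined with $\nabla f(x_t)x_t \ge \nabla f(x_t)v^{FW-S}_t$ this yields $2\nabla f(x_t)(v^{A}_t - v^{FW-S}_t) \ge \nabla f(x_t)(v^{A}_t - v^{FW}_t)$; feeding this into Lemma~\ref{lem:simplex-descent-step} with $h = f_S$ and $L_h = L_f$, and then the strong-convexity inequality, produces $f(x_t) - f(x_{t+1}) \ge \alpha h_t/(2 L_f k)$. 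For a Frank–Wolfe step, the standard smoothness/line-search argument gives $f(x_t) - f(x_{t+1}) \ge \tfrac12 \nabla f(x_t)(x_t - v^{FW})\min\{1, \nabla f(x_t)(x_t - v^{FW})/(2 L_f)\}$; splitting on whether the $\min$ equals $1$ and using convexity then strong convexity yields either $h_{t+1} \le h_t/2$ or again $f(x_t) - f(x_{t+1}) \ge \alpha h_t/(2 L_f k)$, and since $\alpha \le L_f$ the contraction $h_{t+1} \le (1 - \alpha/(2 L_f k)) h_t$ holds in both subcases. A drop step trivially gives $h_{t+1} \le h_t$, and — exactly as in Theorem~\ref{thm:pgd-FW} — the total number of drop steps is at most the number of FW steps, since $S_t$ grows by at most one element and only on FW steps. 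Writing $T = \Td + T_{\text{FW}} + T_{\text{drop}} \le \Td + 2 T_{\text{FW}}$ and compounding, $h_T \le (1 - \alpha/(2 L_f k))^{\Td + T_{\text{FW}}} h_0 \le (1 - \alpha/(2 L_f k))^{T/2} h_0 \le (1 - \alpha/(4 L_f k))^{T} h_0$, the last step using $(1 - \epsilon/2)^2 \ge 1 - \epsilon$ for $\epsilon \in (0,1)$.

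For the non-strongly-convex case ($\alpha = 0$) I would rerun the same step analysis: descent steps give $h_t - h_{t+1} \ge h_t^{2}/(4 L_f)$, now using convexity to lower-bound the active-set gap by $\nabla f(x_t)(x - v^{FW}) \ge h_t$; FW steps give $h_{t+1} \le h_t - h_t^{2}/(4 L_f)$ when $h_t \le 2 L_f$ and $h_{t+1} \le L_f \le h_t/2$ otherwise. Then I would prove $h_T \le 4 L_f/(\Td + T_{\text{FW}})$ by induction on $T$: drop steps follow from monotonicity, the base cases $\Td + T_{\text{FW}} \le 2$ hold directly because any descent or FW step already forces $h_T \le 2 L_f$ (no bound on $h_{T-1}$ needed), and the inductive step $\Td + T_{\text{FW}} \ge 3$ follows from $h_{T-1} \le 4 L_f/(\Td + T_{\text{FW}} - 1) \le 2 L_f$ via the textbook manipulation $\tfrac{4 L_f}{m-1} - \tfrac{4 L_f}{(m-1)^2} \le \tfrac{4 L_f}{m}$. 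The claimed $h_T \le 8 L_f/T$ then follows from $T_{\text{drop}} \le T_{\text{FW}}$.

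The main obstacle — really the only place genuine care is needed — is the descent-step bound: getting the factor-of-two loss from the branch condition exactly right so that $\nabla f(x_t)(v^{A} - v^{FW})$ (the quantity controlling $h_t$ through strong convexity) is dominated by the active-set gap $\nabla f(x_t)(v^{A} - v^{FW-S})$ that Lemma~\ref{lem:simplex-descent-step} actually certifies a decrease against, and verifying that $L_{f_S} \le L_f$ so that $C^{\Delta} = L_f$ with no hidden dependence on $k$. Everything else is the epoch-free bookkeeping of Theorem~\ref{thm:pgd-FW} with the explicit simplex constants substituted in, plus the elementary inductions for the $O(1/T)$ rate.
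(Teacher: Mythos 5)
Your proposal is correct and follows essentially the same route as the paper's proof: the same explicit simplex constants ($\mu \ge 4\alpha/k$, $C^{\Delta}=L_f$), the same two workhorse inequalities, the same factor-of-two reduction from the branch condition feeding into Lemma~\ref{lem:simplex-descent-step}, the same $T \le \Td + 2T_{\text{FW}}$ bookkeeping with compounding via $(1-\epsilon/2)^2 \ge 1-\epsilon$, and the same induction for the $O(1/T)$ rate in the non-strongly-convex case.
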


\section{Algorithmic enhancements}
\label{sec:impr-actu-impl}

We describe various enhancements that can be made to the BCG
algorithm, to improve its practical performance while staying broadly
within the framework above.
Computational testing with these enhancements is reported in
Section~\ref{sec:comp-results}.

\subsection{Sparsity and culling of active sets}
\label{sec:cull}

Sparse solutions (which in the current context means
``solutions that are a convex combination of a small number of
vertices of $P$'') are desirable for many applications. Techniques for
promoting sparse solutions in conditional gradients were considered in
\cite{rao2015forward}.  In many situations, a sparse approximate
solution can be identified at the cost of some increase in the value
of the objective function.

We explored two sparsification approaches, which can be applied
separately or together, and performed preliminary computational tests
for a few of our experiments in Section~\ref{sec:comp-results}.

\begin{enumerate}
\item \label{item:promoteDrop} \emph{Promoting drop steps.} Here we
  relax Line~\ref{line:accept-drop} in
  Algorithm~\ref{alg:simplex-descent} from testing \(f(y) \geq f(x)\)
  to \(f(y) \geq f(x) - \varepsilon\), where \(\varepsilon \coloneqq \min\{
  \frac{\max\{p,0\}}{2}, \varepsilon_0\}\) with \(\varepsilon_0 \in
  \R\) some upper bound on the accepted potential increase in
  objective function value and \(p\) being the amount of reduction in
  $f$ achieved on the latest iteration. This technique allows a
  controlled increase of the objective function value in return for
  additional sparsity. The same convergence analysis will apply, with
  an additional factor of $2$ in the estimates of the total number of
  iterations.
\item \label{item:postOpt} \emph{Post-optimization.} Once the
  considered algorithm has stopped with active set \(S_0\), solution
  \(x_0\), and dual gap \(d_0\), we re-run the algorithm with the same
  objective function \(f\) over \(\conv{S_0}\), i.e., we
  solve \(\min_{x \in \conv{S_0}} f(x)\) terminating when the dual gap
  reaches \(d_0\).
\end{enumerate}

These approaches can sparsify the solutions of the baseline algorithms
Away-step Frank–Wolfe, Pairwise Frank–Wolfe, and lazy Pairwise
Frank–Wolfe; see \cite{rao2015forward}. We observed, however, that the
iterates generated by BCG are often quite sparse. In fact, the
solutions produced by BCG are sparser than those produced by the
baseline algorithms even when sparsification is used in the benchmarks
but \emph{not} in BCG!  This effect is not surprising, as BCG adds new
vertices to the active vertex set only when really necessary for ensuring
further progress in the optimization.

Two representative examples are shown in Table~\ref{tab:sparse}, where
we report the effect of sparsification in the size of the active set
as well as the increase in objective function value.

We also compared evolution of the function value and size of the
active set.  BCG decreases function value much more for the same
number of vertices because, by design, it performs more descent on a
given active set; see Figure~\ref{fig:sparsity}.

\begin{table*}
  \centering
  \begin{tabular}{lrrrr}
    \toprule
& vanilla & \ref{item:promoteDrop} &
                                                     \ref{item:promoteDrop},
                                                     \ref{item:postOpt}
  & \(\Delta f(x)\)\\
\midrule
PCG & \(112\) & \(62\) & \(60\) & \(2.6\%\) \\
LPCG & \(94\) & \(70\) & \(64\) & \(0.1\%\)\\
BCG & \(60\) & \(59\) & \(40\) & \(0.0\%\)\\
\bottomrule
\end{tabular} \qquad
\begin{tabular}{lrrr}
  \toprule
& vanilla &
                                                     \ref{item:promoteDrop},
                                                     \ref{item:postOpt}
  & \(\Delta f(x)\) \\
\midrule
ACG & \(300\) & \(298\) & \(7.4\%\) \\
PCG & \(358\) & \(255\) & \(8.2\%\) \\
BCG & \(211\) & \(211\) & \(0.0\%\) \\
\bottomrule
\end{tabular}
  \caption{Size of active set  and percentage increase in function value after sparsification. (No sparsification performed for BCG.) Left: Video Co-localization over
    \texttt{netgen\_08a}. Since we use LPCG and PCG as benchmarks, we
    report \ref{item:promoteDrop} separately as well. Right: Matrix
    Completion over \texttt{movielens100k} instance.  BCG without
    sparsification provides sparser solutions than the baseline
    methods with sparsification.
    In the last column, we report the percentage increase in objective
    function value due to sparsification. (Because this quantity is
    not affine invariant, this value should serve only to rank the
    quality of solutions.)}
  \label{tab:sparse}
\end{table*}

\subsection{Blending with pairwise steps}
\label{sec:blend-with-lazify}

Algorithm~\ref{alg:LOLCG} mixes descent steps with Frank–Wolfe steps.
One might be tempted to replace the Frank–Wolfe steps with (seemingly
stronger) pairwise steps, as the information needed for the latter
steps is computed in any case. In our tests, however, this variant did not
substantially differ in practical performance from the one that uses
the standard Frank–Wolfe step (see
Figure~\ref{fig:comp-bcg-fbcg pairwise}).  The explanation is that BCG
uses descent steps that typically provide better directions than
either Frank–Wolfe steps or pairwise steps. When the pairwise gap over
the active set is small, the Frank–Wolfe and pairwise directions
typically offer a similar amount of reduction in $f$.

\section{Computational experiments}
\label{sec:comp-results}

To compare our experiments to previous work, we used problems and
instances similar to those in
\cite{FW-converge2015,LDLCC2016,rao2015forward,
  braun2016lazifying,lan2017conditional}.  These problems include structured
regression, sparse regression, video co-localization, sparse signal
recovery, matrix completion, and Lasso. In particular, we compared our
algorithm to the Pairwise Frank–Wolfe algorithm from
\cite{FW-converge2015,LDLCC2016} and the lazified Pairwise Frank–Wolfe
algorithm from \cite{braun2016lazifying}.
Figure~\ref{fig:example} summarizes our results on four test problems.

We also benchmarked against the lazified
versions of the vanilla Frank–Wolfe and the Away-step Frank–Wolfe as
presented in \cite{braun2016lazifying} for completeness. We
implemented our code in \texttt{Python 3.6} using \texttt{Gurobi} (see
\cite{optimization2016gurobi})
as the LP solver for complex feasible regions;
as well as
obvious direct implementations for the probability
simplex, the cube and the \(\ell_1\)-ball.
As feasible regions, we used instances from
MIPLIB2010 (see \cite{koch2011miplib}), as done before in
\cite{braun2016lazifying}, along with some of the examples in
\cite{bashiri2017decomposition}. Code is available at
\url{https://github.com/pokutta/bcg}.
We used quadratic objective functions for the tests with random
coefficients, making sure that the global minimum lies outside the
feasible region, to make the optimization problem non-trivial; see
below in the respective sections for more details.

Every plot contains four diagrams depicting results of a single
instance.  The upper row measures progress in the logarithm of the
function value, while the lower row does so in the logarithm of the
gap estimate.  The left column measures performance in the number of
iterations, while the right column does so in wall-clock time. In the
graphs we will compare various algorithms denoted by the following
abbreviations: Pairwise Frank–Wolfe (PCG), Away-step Frank–Wolfe
(ACG), (vanilla) Frank–Wolfe (CG), blended conditional gradients
(BCG); we indicate the lazified versions of \cite{braun2016lazifying}
by prefixing with an \lq{}L\rq{}.  All tests were conducted with an
instance-dependent, fixed time limit, which can be easily read off the
plots.

The value $\Phi_t$ provided by the algorithm is an estimate of the
primal gap $f(x_t) - f(x^*)$.  The lazified versions (including
BCG) use it to estimate the required stepwise progress, halving it
occasionally, which provides a stair-like appearance in the graphs for
the dual progress. Note that if the certification in the
weak-separation oracle that $c(z-x) \ge \Phi$ for all $z \in P$ is
obtained from the original LP oracle (which computes the actual
optimum of $cy$ over $y \in P$), then we update the gap estimate
$\Phi_{t+1}$ with that value; otherwise the oracle would continue to
return \textbf{false} anyway until $\Phi$ drops below that value.  For
the non-lazified algorithms, we plot the dual gap \(\max_{v \in P}
\nabla f(x_{t}) (x_{t} - v)\).

\subsection*{Performance comparison}
\label{sec:perf-comp-bcg}
We implemented Algorithm~\ref{alg:LOLCG} as outlined above and used
SiGD (Algorithm~\ref{alg:simplex-descent}) for the descent steps as described in
Section~\ref{sec:proj-grad-desc-projfree}. For line search in
Line~\ref{line:simplex-line-search} of
Algorithm~\ref{alg:simplex-descent}, we perform standard backtracking,
and for Line~\ref{line:line-search} of Algorithm~\ref{alg:LOLCG}, we
do ternary search. In Figure~\ref{fig:example}, each of the four plots
itself contains four subplots depicting results of four variants of CG
on a single instance.  The two subplots in each upper row measure
progress in the logarithm (to base 2) of the function value, while the
two subplots in each lower row report the logarithm of
the gap estimate $\Phi_t$ from Algorithm~\ref{alg:LOLCG}. The two
subplots in the left column of each plot report performance in terms
of number of iterations, while the two subplots in the right column
report wall-clock time.

\begin{figure*}[ht]
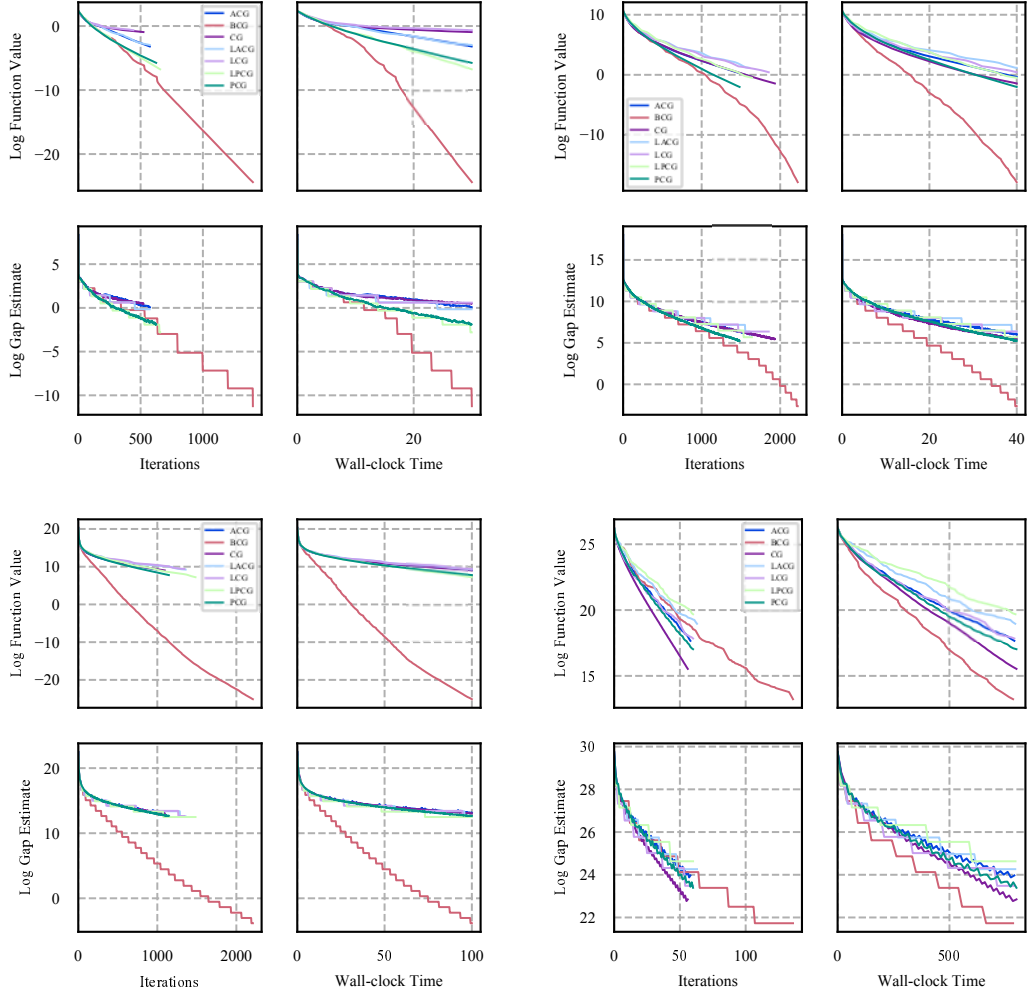

  \centering
  \sidebyside%
  {signal_recovery/CvsPpAa_signal_3000_signal_recovery_03111716_mode2_seed0_maxLsStep100_lsAcc08_postOptFalse_relaxFalse_sigma005_density005_innerDim1000}%
  {lasso/CvsPpAa_lasso_2000_square_norm_Ax_minus_b_03111705_mode2_seed12345_maxLsStep100_lsAcc10_postOptFalse_relaxFalse_sparsity100_innerDim400_scale200}
  \sidebyside%
  {birkhoff/CvsPpAa_birk_50_square_norm_Ax_minus_b_03111732_mode2_seed42_maxLsStep100_lsAcc08_postOptFalse_relaxFalse_innerDim1000_scale200_sparsity10}%
  {CvsPpAa_netgen_12b_square_norm_Ax_minus_b_03120602_mode2_seed1234_maxLsStep100_lsAcc04_postOptFalse_relaxFalse_innerDim5000_sparsity1000_scale1}
  \caption{Four representative examples.
    (Upper-left)  Sparse signal recovery:
    \(\min_{x \in \R^n: \norm[1]{x} \leq \tau} \norm[2]{y - \Phi
      x}^2\),
    where \(\Phi\) is of size \(1000 \times 3000\) with density \(0.05\).    BCG made
\(1402\) iterations with \(155\) calls to the weak-separation oracle $\LPsep{P}$. The final solution
  is a convex combination of \(152\) vertices.
    (Upper-right)  Lasso. We solve \(\min_{x \in P} \norm{Ax
      - b}^2\) with \(P\) being the (scaled) \(\ell_1\)-ball.
    \(A\) is a \(400 \times 2000\) matrix with \(100\)
non-zeros. BCG made \(2130\) iterations, calling $\LPsep{P}$ \(477\)
times, with the final solution being a convex combination of \(462\)
vertices.
    (Lower-left) Structured regression over the Birkhoff polytope of dimension \(50\). BCG made
  \(2057\) iterations with \(524\) calls to $\LPsep{P}$. The  final solution
  is a convex combination of \(524\) vertices.
    (Lower-right)  Video co-localization over \texttt{netgen\_12b}
    polytope with an underlying \(5000\)-vertex graph.
  BCG made \(140\) iterations, with  \(36\) calls to $\LPsep{P}$.
  The final solution is a convex combination of \(35\) vertices.
  }
  \label{fig:example}
\end{figure*}



\paragraph{Lasso.} We tested BCG on lasso instances and compared them
to vanilla Frank–Wolfe, Away-step Frank–Wolfe, and Pairwise
Frank–Wolfe.  We generated Lasso instances similar to
\cite{FW-converge2015}, which has also also been used by several
follow-up papers as benchmark. Here we solve
\(\min_{x \in P} \norm{Ax - b}^2\) with \(P\) being the (scaled)
\(\ell_1\)-ball. We considered instances of varying sizes and the
results (as well as details about the instance) can be found in
Figure~\ref{fig:lasso}. Note that we did not benchmark any of the
lazified versions of \cite{braun2016lazifying} here, because the linear
programming oracle is so simple that lazification is not beneficial
and we used the LP oracle directly.

\begin{figure*}[htb]
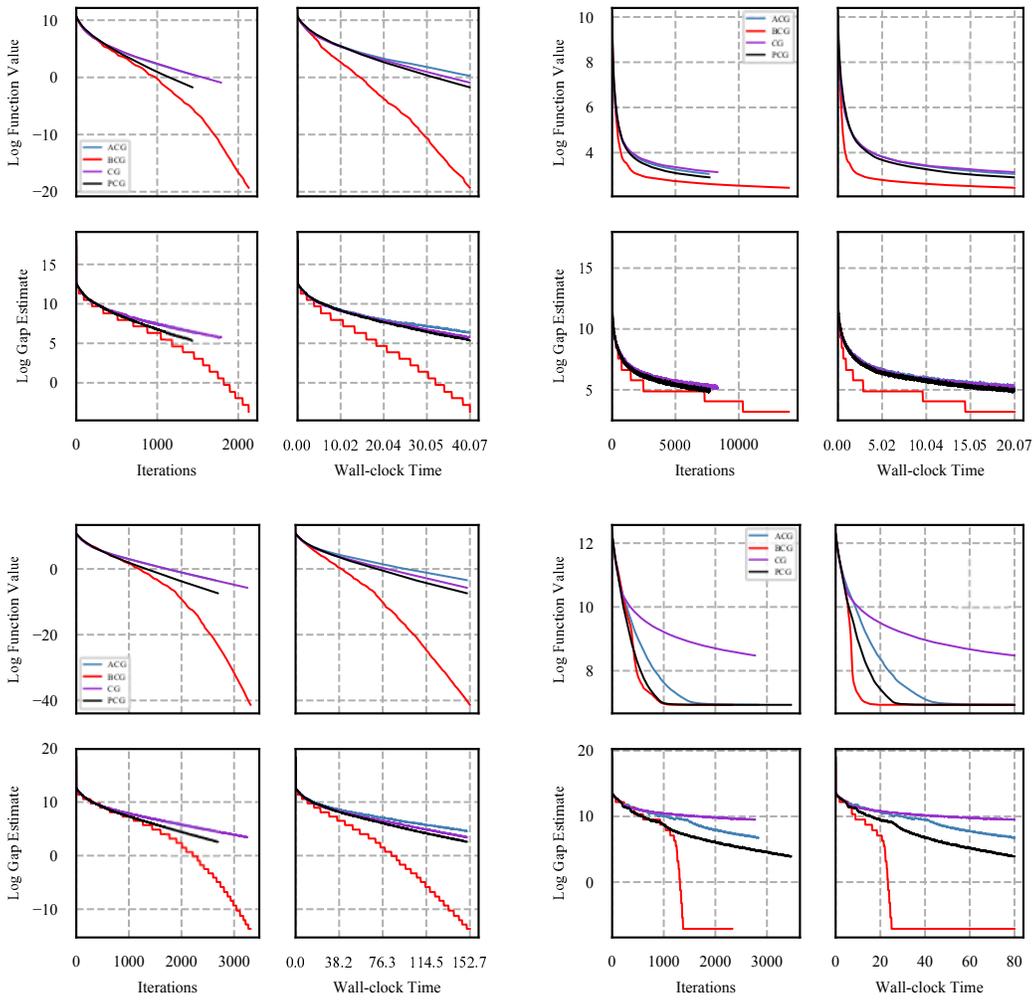

  \centering
  \sidebyside%
  {lasso/CsPA_lasso_2000_square_norm_Ax_minus_b_12152113_mode2_seed12345_maxLsStep100_lsAcc10_postOptFalse_relaxFalse_sparsity100_scale200_innerDim400}%
  {lasso/CsPA_lasso_200_square_norm_Ax_minus_b_12151326_mode2_seed12345_maxLsStep100_lsAcc10_postOptFalse_relaxFalse_sparsity100_scale200_innerDim200}
  \sidebyside%
  {lasso/CsPA_lasso_3000_square_norm_Ax_minus_b_12152129_mode2_seed12345_maxLsStep100_lsAcc10_postOptFalse_relaxFalse_scale200_sparsity100_innerDim500}%
  {lasso/CsPA_lasso_1000_square_norm_Ax_minus_b_12152104_mode2_seed12345_maxLsStep100_lsAcc10_postOptFalse_relaxFalse_scale200_innerDim1000_sparsity200}
\caption{Comparison of BCG, ACG, PCG and CG on Lasso
  instances.
Upper-left: \(A\) is a \(400 \times 2000\) matrix with \(100\)
non-zeros. BCG made \(2130\) iterations, calling the LP oracle \(477\)
times, with the final solution being a convex combination of \(462\)
vertices giving the sparsity.
Upper-right: \(A\) is a \(200 \times 200\) matrix with \(100\)
non-zeros. BCG made \(13952\) iterations, calling the LP oracle \(258\)
times, with the final solution being a convex combination of \(197\)
vertices giving the sparsity.
Lower-left: \(A\) is a \(500 \times 3000\) matrix with \(100\)
non-zeros. BCG made \(3314\) iterations, calling the LP oracle \(609\)
times, with the final solution being a convex combination of \(605\)
vertices giving the sparsity.
Lower-right: \(A\) is a \(1000 \times 1000\) matrix with \(200\)
non-zeros. BCG made \(2328\) iterations, calling the LP oracle \(1007\)
times, with the final solution being a convex combination of \(526\)
vertices giving the sparsity.
}
  \label{fig:lasso}
\end{figure*}

\paragraph{Video co-localization instances.} We also tested BCG on
video co-localization instances as done in \cite{FW-converge2015}. It
was shown in \cite{joulin2014efficient} that video co-localization
can be naturally reformulated as optimizing a quadratic function over
a flow (or path) polytope. To this end, we run tests on the same flow polytope
instances as used in \cite{lan2017conditional} (obtained from \url{http://lime.cs.elte.hu/~kpeter/data/mcf/road/}). We depict the results
in Figure~\ref{fig:videocolocalization}.

\begin{figure*}[htb]
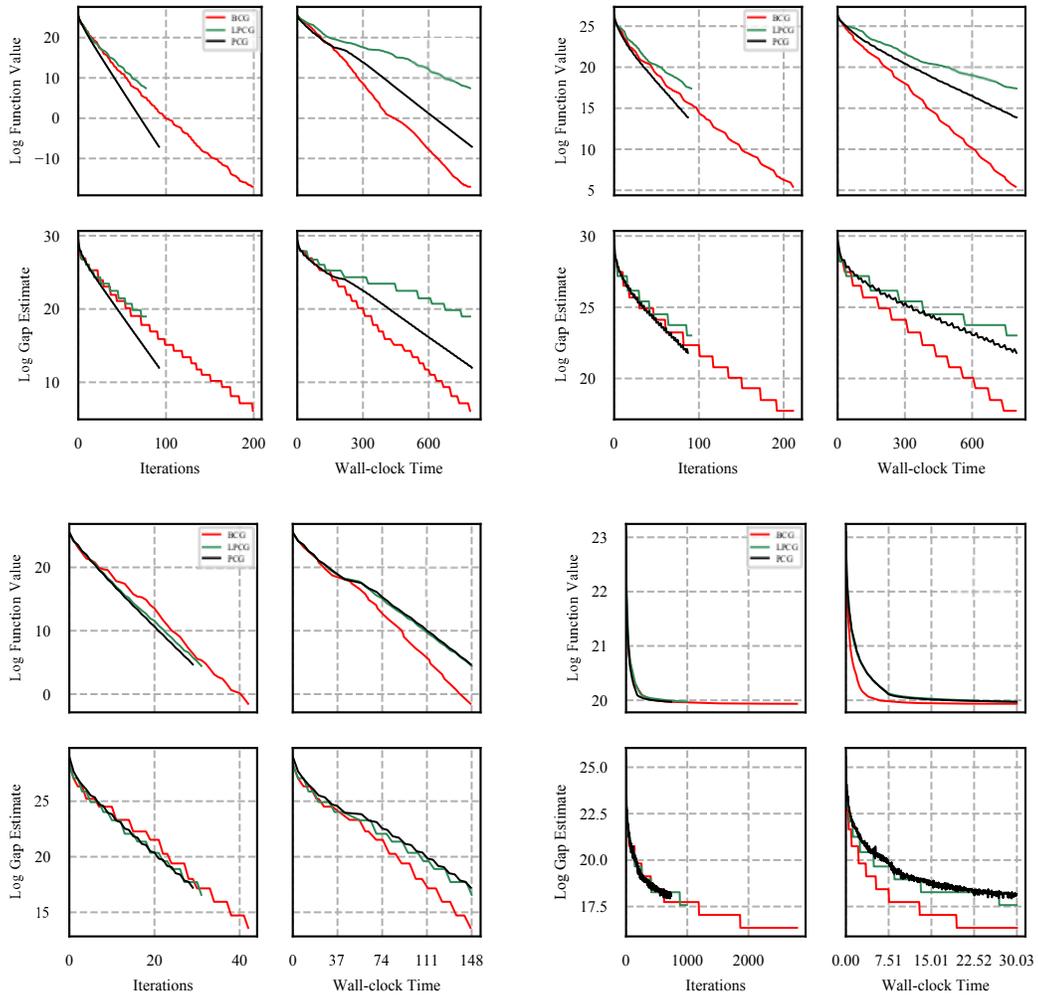

  \centering
  \sidebyside%
  {Pspf_netgen_12b_square_norm_Ax_minus_b_12231925_mode2_seed0_maxLsStep100_lsAcc08_postOptFalse_relaxFalse_innerDim3000_sparsity1000_scale1_b}%
  {Psps_netgen_12b_square_norm_Ax_minus_b_12171930_mode2_seed0_maxLsStep100_lsAcc04_postOptFalse_relaxFalse_innerDim5000_sparsity1000_scale1}
  \sidebyside%
  {psP_path_small_a_square_norm_Ax_minus_b_12251220_mode2_seed0_maxLsStep100_lsAcc08_postOptFalse_relaxFalse_innerDim2000_sparsity200_scale200}%
  {pPs_netgen_08a_square_norm_Ax_minus_b_12171407_mode2_seed0_maxLsStep100_lsAcc04_postOptFalse_relaxFalse_innerDim800_sparsity200_scale1}
\caption{Comparison of PCG, Lazy PCG,
  and BCG on video co-localization instances. Upper-Left:
  \texttt{netgen\_12b} for a \(3000\)-vertex graph.
BCG made \(202\) iterations, called $\LPsep{P}$
  \(56\) times and the final solution is a convex combination of
  \(56\) vertices. Upper-Right: \texttt{netgen\_12b} over
  a \(5000\)-vertex graph.
  BCG did \(212\) iterations, $\LPsep{P}$ was talked \(58\) times, and
  the final solution is a convex combination of \(57\) vertices.
Lower-Left: \texttt{road\_paths\_01\_DC\_a} over a
  \(2000\)-vertex graph.
  Even on instances where lazy PCG gains little advantage
  over PCG, BCG performs significantly better with empirically higher
  rate of convergence. BCG made \(43\) iterations, $\LPsep{P}$ was called
  \(25\) times, and the final convex combination has \(25\) vertices
Lower-Right:
  \texttt{netgen\_08a} over a \(800\)-vertex graph.
  BCG made \(2794\)
  iterations, $\LPsep{P}$ was called \(222\) times, and the final convex
  combination has \(106\) vertices. }
  \label{fig:videocolocalization}
\end{figure*}

\paragraph{Structured regression.}

We also compared BCG against PCG and LPCG on structured regression
problems, where we minimize a quadratic objective function over
polytopes corresponding to hard optimization problems used as
benchmarks in e.g.,
\cite{braun2016lazifying,lan2017conditional,bashiri2017decomposition}.
The polytopes were taken from MIPLIB2010 (see
\cite{koch2011miplib}). Additionally, we compare ACG, PCG, and
vanilla CG over the Birkhoff polytope for which linear
optimization is fast, so that there is little gain to be expected from
lazification.
See Figures~\ref{fig:structRegress} and~\ref{fig:structRegressBirkhoff}
for results.

\begin{figure*}[htb]
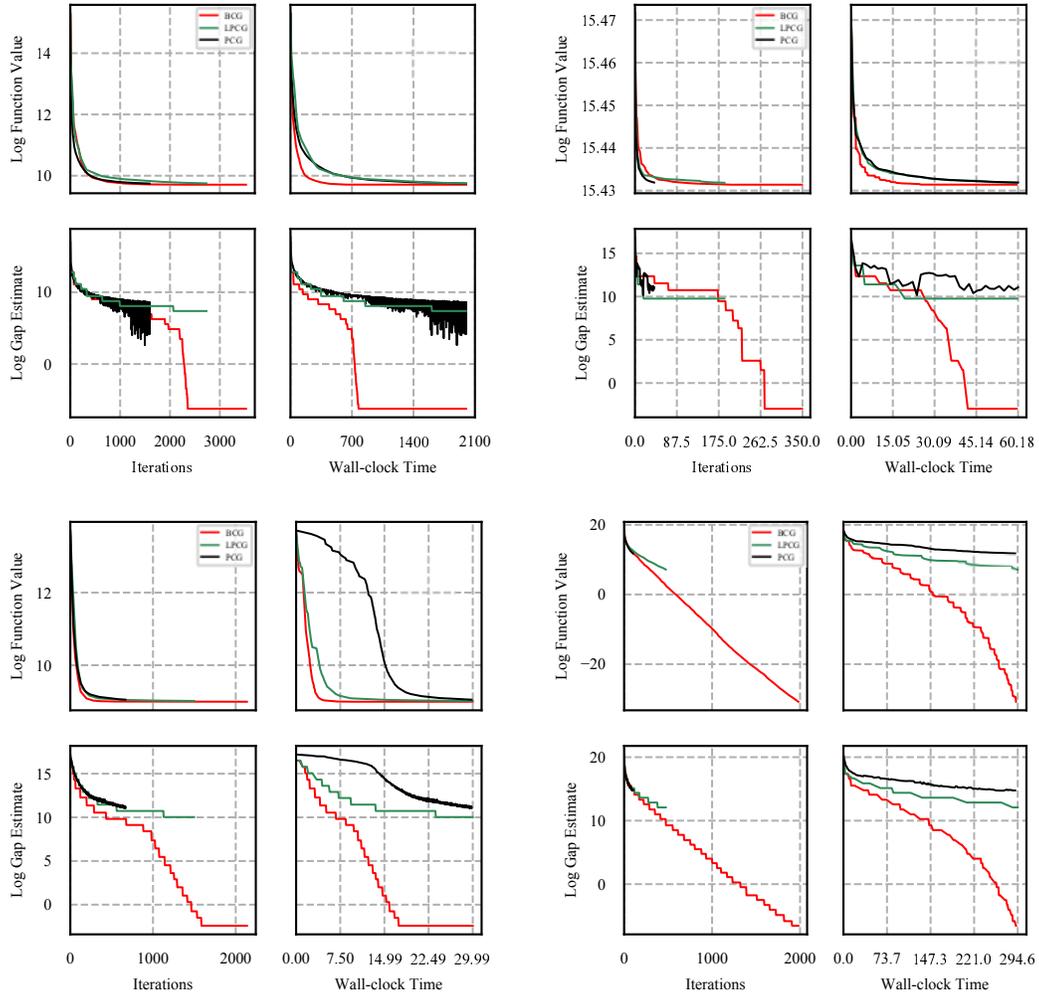

  \centering
\sidebyside%
  {Ppsav_disctom_square_norm_Ax_minus_b_12172023_mode2_seed0_maxLsStep100_lsAcc04_postOptFalse_relaxFalse_innerDim3000_sparsity500_scale1}%
  {pPs_maxcut_28_square_norm_Ax_minus_b_12202141_mode2_seed12345678_maxLsStep100_lsAcc06_postOptFalse_relaxFalse_innerDim5000_sparsity1200_scale5}
\sidebyside%
  {Psp_m100_square_norm_Ax_minus_b_12232137_mode2_seed12345_maxLsStep100_lsAcc08_postOptFalse_relaxFalse_innerDim2000_sparsity1200_scale200}%
  {Psp_spanningtree_10_square_norm_Ax_minus_b_12171627_mode2_seed12345_maxLsStep100_lsAcc08_postOptFalse_relaxFalse_innerDim200_sparsity50_scale200}
\caption{Comparison of BCG, LPCG and PCG on structured regression
  instances.
Upper-Left: Over the \texttt{disctom} polytope. BCG made
  \(3526\) iterations with \(1410\) $\LPsep{P}$ calls and the final solution
  is a convex combination of \(85\) vertices.
Upper-Right: Over a
  \texttt{maxcut} polytope over a graph with \(28\) vertices. BCG made
  \(76\) $\LPsep{P}$ calls and the final solution is a convex combination of
  \(13\) vertices.
Lower-Left: Over the \texttt{m100n500k4r1}
  polytope.  BCG made
  \(2137\) iterations with \(944\) $\LPsep{P}$ calls and the final solution
  is a convex combination of \(442\) vertices.
Lower-right: Over the spanning tree
  polytope over the complete graph with \(10\) nodes.  BCG made
  \(1983\) iterations with \(262\) $\LPsep{P}$ calls and the final solution
  is a convex combination of \(247\) vertices.
BCG outperforms LPCG and PCG, even in the cases where LPCG
  is much faster than PCG.}
  \label{fig:structRegress}
\end{figure*}

\begin{figure*}[htb]
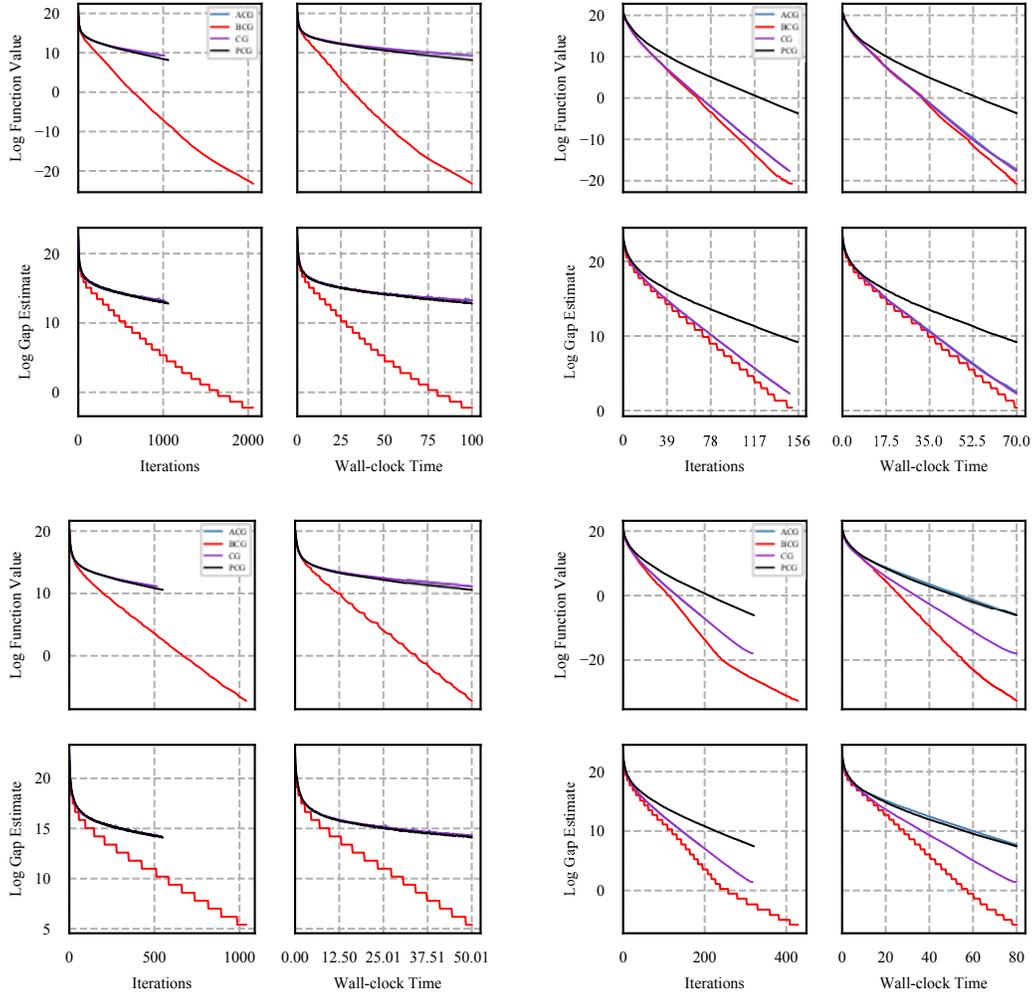

  \centering
\sidebyside%
  {birkhoff/CsPA_birk_50_square_norm_Ax_minus_b_12151437_mode2_seed42_maxLsStep100_lsAcc08_postOptFalse_relaxFalse_scale200_innerDim1000_sparsity10}%
  {birkhoff/CsPA_birk_100_square_norm_Ax_minus_b_12152035_mode2_seed42_maxLsStep100_lsAcc10_postOptFalse_relaxFalse_sparsity50_scale200_innerDim1000}
\sidebyside%
  {birkhoff/CsPA_birk_50_square_norm_Ax_minus_b_12151945_mode2_seed42_maxLsStep100_lsAcc08_postOptFalse_relaxFalse_scale200_sparsity5_innerDim1000}%
  {birkhoff/CsPA_birk_80_square_norm_Ax_minus_b_12152017_mode2_seed42_maxLsStep100_lsAcc10_postOptFalse_relaxFalse_innerDim1000_sparsity10_scale200}
\caption{Comparison of BCG, ACG, PCG and CG over the Birkhoff polytope.
Upper-Left: Dimension \(50\). BCG made
  \(2057\) iterations with \(524\) $\LPsep{P}$ calls and the final solution
  is a convex combination of \(524\) vertices.
Upper-Right: Dimension \(100\). BCG made
  \(151\) iterations with \(134\) $\LPsep{P}$ calls and the final solution
  is a convex combination of \(134\) vertices.
Lower-Left: Dimension \(50\).  BCG made
  \(1040\) iterations with \(377\) $\LPsep{P}$ calls and the final solution
  is a convex combination of \(377\) vertices.
Lower-right: Dimension \(80\).  BCG made
  \(429\) iterations with \(239\) $\LPsep{P}$ calls and the final solution
  is a convex combination of \(239\) vertices.
BCG outperforms  ACG, PCG and CG in all cases.}
  \label{fig:structRegressBirkhoff}
\end{figure*}

\paragraph{Matrix completion.}
\label{sec:matrix-completion}
Clearly, our algorithm also works directly over compact convex sets,
even though with a weaker theoretical bound of \(O(1/\varepsilon)\) as
convex sets need not have a pyramidal width bounded away from \(0\),
and linear optimization might dominate the cost, and hence the
advantage of lazification and BCG might be even greater empirically.

To this end, we also considered Matrix Completion instances over
the spectrahedron \(S = \{X \succeq 0 : \trace{X} = 1\} \subseteq
\R^{n \times n}\), where we solve the problem:
\[\min_{X \in S} \sum_{(i, j) \in L} (X_{i, j} - T_{i, j})^2,\]
where \(D = \{T_{i, j} \mid (i, j) \in L\} \subseteq \R\) is a data
set. In our tests we used the data sets Movie Lens 100k and Movie Lens
1m from \url{https://grouplens.org/datasets/movielens/} We subsampled
in the 1m case to generate \(3\) different instances.

As in the case of the Lasso benchmarks, we benchmark against ACG, PCG,
and CG, as the linear programming oracle is simple and there is no
gain to be expected from lazification. In the case of matrix
completion, the performance of BCG is quite comparable to ACG, PCG,
and CG in iterations, which makes sense over the spectrahedron,
because the gradient approximations computed by the linear
optimization oracle are essentially identical to the actual gradient,
so that there is no gain from the blending with descent steps. In
wall-clock time, vanilla CG performs best as the algorithm has the
lowest implementation overhead beyond the oracle calls compared to
BCG, ACG, and PCG (see Figure~\ref{fig:matrixCompletion}) and in
particular does not have to maintain the (large) active set.

\begin{figure*}[htb]
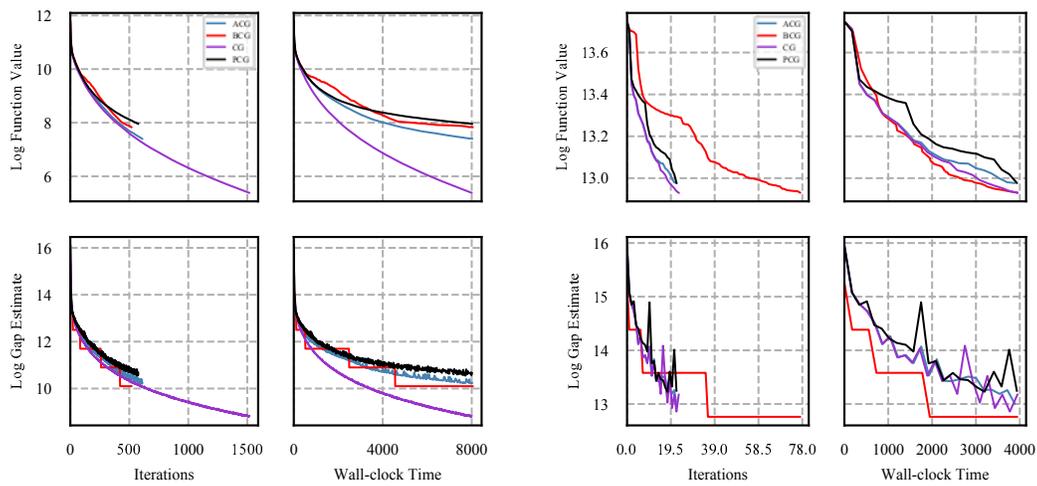

  \centering
\sidebyside%
  {matrix_completion/CsPA_spec_100k_MatrixCompletion_12170833_mode2_seed12345_maxLsStep100_lsAcc08_postOptFalse_relaxFalse_density1_scale1_nuclear_bound10000_regularization_ratio0}%
  {matrix_completion/CsPA_spec_1m_MatrixCompletion_12162255_mode2_seed0_maxLsStep100_lsAcc08_postOptFalse_relaxFalse_density05_scale200_nuclear_bound10000_regularization_ratio0}
\caption{Comparison of BCG, ACG, PCG and CG on matrix
  completion instances over
  the spectrahedron.
Upper-Left: Over the movie lens 100k data set. BCG made
  \(519\) iterations with \(346\) $\LPsep{P}$ calls and the final solution
  is a convex combination of \(333\) vertices.
Upper-Right: Over a subset of movie lens 1m data set. BCG made
  \(78\) iterations with \(17\) $\LPsep{P}$ calls and the final solution
  is a convex combination of \(14\) vertices.
BCG performs very similar to ACG, PCG, and vanilla CG as discussed.}
  \label{fig:matrixCompletion}
\end{figure*}

\paragraph{Sparse signal recovery.}
\label{sec:sparse-sign-recov}
We also performed computational experiments on the sparse signal
recovery instances from \cite{rao2015forward}, which have the
following form:
\[\hat x = \argmin_{x \in \R^n: \norm[1]{x} \leq \tau} \norm[2]{y -
    \Phi x}^2.\]
We chose a variety of parameters in our tests, including one test that
matches the setup in \cite{rao2015forward}. As in the case of the
Lasso benchmarks, we benchmark against ACG, PCG, and CG, as the linear
programming oracle is simple and there is no gain to be expected from
lazification. The results are shown in
Figure~\ref{fig:signalRecovery}.

\begin{figure*}[htb]
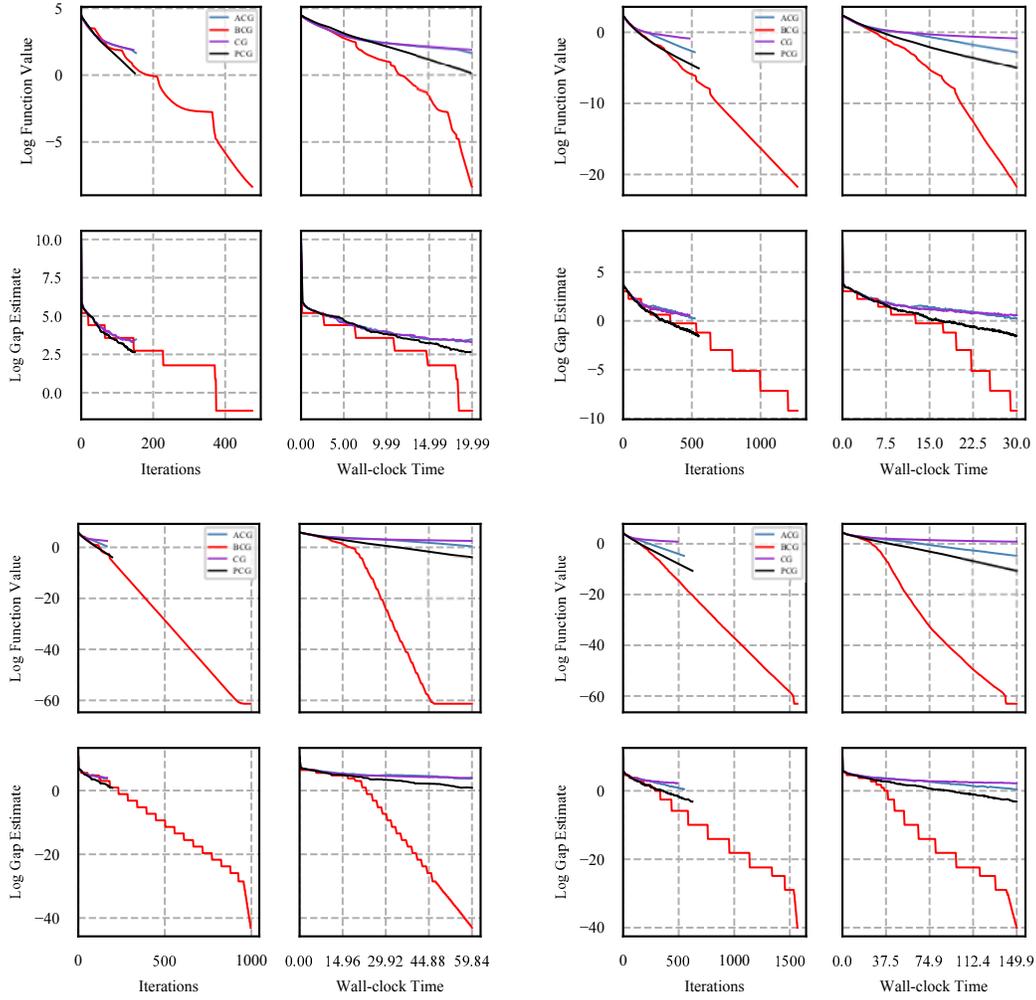

  \centering
\sidebyside%
  {signal_recovery/CsPA_signal_1000_signal_recovery_12151413_mode2_seed0_maxLsStep100_lsAcc10_postOptFalse_relaxFalse_density01_sigma005_innerDim5000}%
  {signal_recovery/CsPA_signal_3000_signal_recovery_12151351_mode2_seed0_maxLsStep100_lsAcc08_postOptFalse_relaxFalse_innerDim1000_density005_sigma005}
\sidebyside%
  {signal_recovery/CsPA_signal_1000_signal_recovery_12151420_mode2_seed0_maxLsStep100_lsAcc12_postOptFalse_relaxFalse_innerDim10000_density005_sigma005}%
  {signal_recovery/CsPA_signal_2000_signal_recovery_12151409_mode2_seed12345_maxLsStep100_lsAcc10_postOptFalse_relaxFalse_sigma005_innerDim5000_density005}
\caption{Comparison of BCG, ACG, PCG and CG on a sparse signal
  recovery problem.
Upper-Left: Dimension is \(5000 \times 1000\) density is \(0.1\). BCG made
  \(547\) iterations with \(102\) $\LPsep{P}$ calls and the final solution
  is a convex combination of \(102\) vertices.
Upper-Right: Dimension is \(1000 \times 3000\) density is \(0.05\). BCG made
  \(1402\) iterations with \(155\) $\LPsep{P}$ calls and the final solution
  is a convex combination of \(152\) vertices.
Lower-Left: Dimension is \(10000 \times 1000\) density is \(0.05\). BCG made
  \(997\) iterations with \(87\) $\LPsep{P}$ calls and the final solution
  is a convex combination of \(52\) vertices.
Lower-right: dimension is \(5000 \times 2000\) density is \(0.05\). BCG made
  \(1569\) iterations with \(124\) $\LPsep{P}$ calls and the final solution
  is a convex combination of \(103\) vertices.
BCG outperforms all other algorithms in all examples significantly.}
  \label{fig:signalRecovery}
\end{figure*}

\subsection*{PGD vs.\ SiGD as subroutine}
\label{sec:pgd-as-sigd}

To demonstrate the superiority of SiGD over PGD we also tested two
implementations of BCG, once with standard PGD as subroutine and once
with SiGD as subroutine. The results can be found in
Figure~\ref{fig:comp-bcg-fbcg pairwise} (right): while PGD and SiGD compare
essentially identical in per-iteration progress, in terms of wall
clock time the SiGD variant is much faster. For comparison, we also
plotted LPCG on the same instance.

\begin{figure*}[htb]
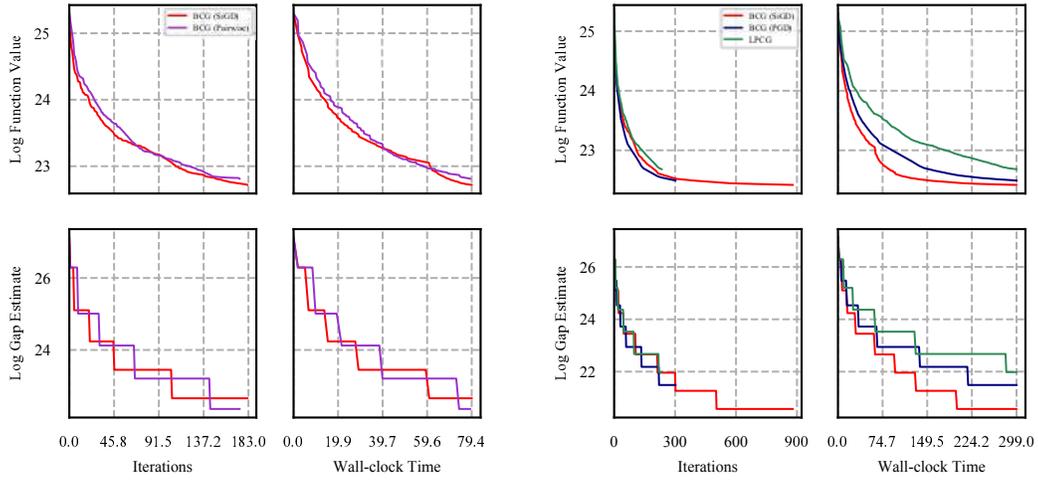

  \centering
  \sidebyside%
  {ss_netgen_10a_square_norm_Ax_minus_b_12171759_mode2_seed1234_maxLsStep100_lsAcc05_postOptFalse_relaxFalse_innerDim3000_sparsity1000_scale1}%
  {Pps_netgen_10a_square_norm_Ax_minus_b_12202123_mode2_seed1234_maxLsStep100_lsAcc05_postOptFalse_relaxFalse_innerDim3000_sparsity1000_scale1}
  \caption{Comparison of BCG variants on a small
    video co-localization instance (instance \texttt{netgen\_10a}). Left: BCG with vanilla
    Frank–Wolfe steps (red) and with pairwise steps (purple). Performance is essentially
    equivalent here which matches our observations on other
    instances.
    Right: Comparison of oracle implementations PGD and SiGD. SiGD is
    significantly faster in wall-clock time.}
  \label{fig:comp-bcg-fbcg pairwise}
\end{figure*}

\subsection*{Pairwise steps vs.\ Frank–Wolfe steps}
\label{sec:pairwise-steps-vs}
As pointed out in Section~\ref{sec:blend-with-lazify}, a natural
extension is to replace the Frank–Wolfe steps in
Line~\ref{line:line-search} of Algorithm~\ref{alg:LOLCG} with pairwise
steps, since the information required is readily available. In
Figure~\ref{fig:comp-bcg-fbcg pairwise} (left) we depict
representative behavior: Little to no advantage when taking the more
complex pairwise step. This is expected as the Frank–Wolfe steps
are only
needed to add new vertices as the drop steps are subsumed the steps
from \Sora.
Note that BCG with Frank–Wolfe steps is
slightly faster per iteration, allowing for more steps within the time limit.

\subsection*{Comparison between lazified variants and BCG}
\label{sec:comp-betw-lazif}

For completeness, we also ran tests for BCG against various other
lazified variants of conditional gradient descent. The results are
consistent with our observations from before which we depict in
Figure~\ref{fig:comp-bcg-vs-lazified}.

\begin{figure*}[htb]
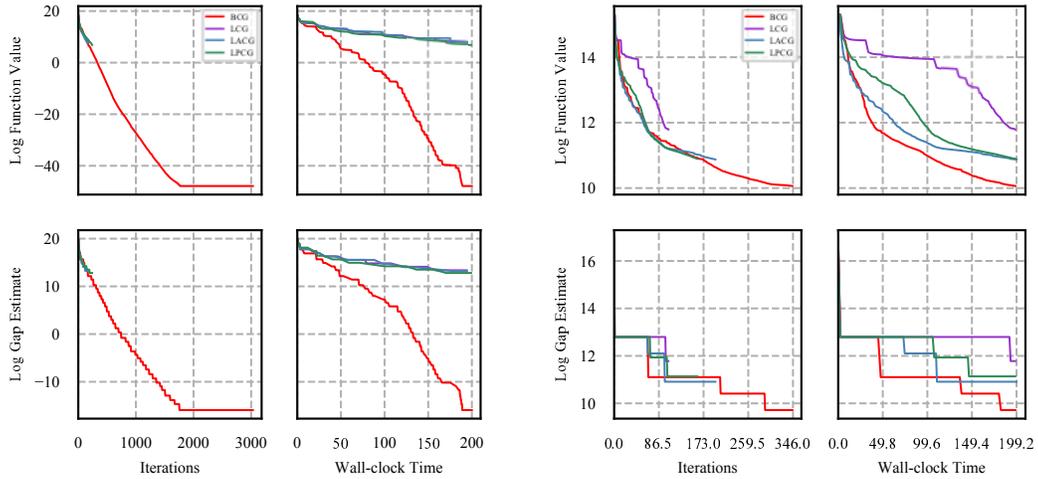

  \centering
  \sidebyside%
  {vspa_spanningtree_11_square_norm_Ax_minus_b_12182230_mode2_seed12345_maxLsStep100_lsAcc08_postOptFalse_relaxFalse_innerDim200_sparsity50_scale200}%
  {Ppsav_disctom_square_norm_Ax_minus_b_12171729_mode2_seed0_maxLsStep100_lsAcc04_postOptFalse_relaxFalse_innerDim3000_sparsity500_scale1}
\caption{Comparison of BCG, LCG,
  ACG, and PCG. Left: Structured regression instance
  over the spanning tree
  polytope over the complete graph with \(11\) nodes demonstrating significant
  performance difference in improving the function value and closing the dual gap; BCG made \(3031\)
  iterations, $\LPsep{P}$ was called \(1501\) times (almost always terminated
  early) and final solution is a convex combination of \(232\) vertices
  only. Right: Structured regression over the \texttt{disctom} polytope; BCG
  made \(346\) iterations, $\LPsep{P}$ was called \(71\) times, and final
  solution is a convex combination of \(39\) vertices only.
  Observe that not only the function value decreases faster,
  but the gap estimate, too.}
  \label{fig:comp-bcg-vs-lazified}
\end{figure*}

\subsection*{Standard vs.\ accelerated version}
\label{sec:stand-vs-accel}

Another natural variant of our algorithm is to replace \Sora
with its accelerated variant (both possible for PGD and
SiGD). As expected, due to the small size of the subproblem, we did
not observe any significant speedup from acceleration; see
Figure~\ref{fig:accelerationComparison}.

\begin{figure*}[htb]
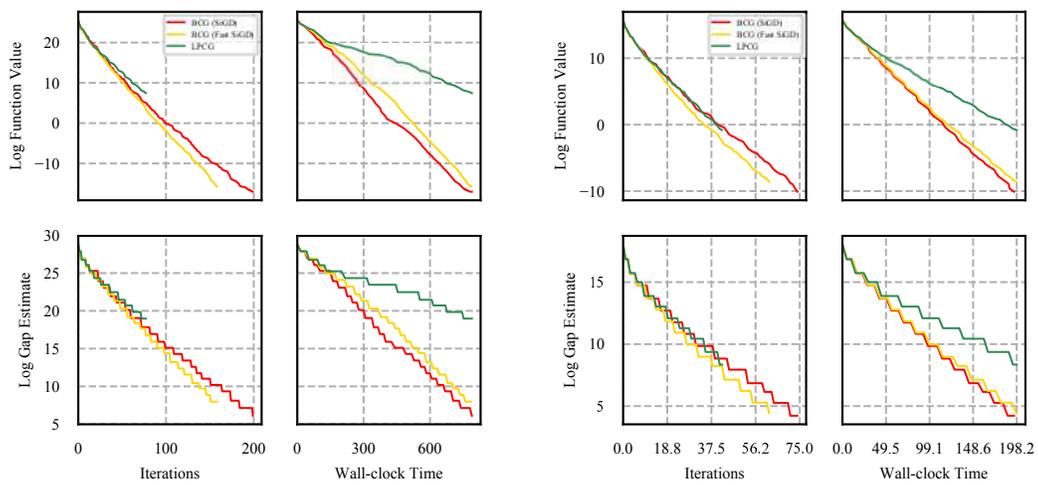

  \centering
  \sidebyside%
  {Pspf_netgen_12b_square_norm_Ax_minus_b_12231925_mode2_seed0_maxLsStep100_lsAcc08_postOptFalse_relaxFalse_innerDim3000_sparsity1000_scale1_a}%
  {fsp_path_small_a_square_norm_Ax_minus_b_12171425_mode2_seed0_maxLsStep100_lsAcc08_postOptFalse_relaxFalse_innerDim3000_sparsity1000_scale1}
\caption{Comparison of BCG, accelerated BCG and LPCG. Left: On a
  medium size video co-localization instance (\texttt{netgen\_12b}). Right: On a larger
  video co-localization instance (\texttt{road\_paths\_01\_DC\_a}). Here the accelerated version is (slightly) better in iterations but not in wall-clock
  time though. These findings are representative of all our other tests.}
  \label{fig:accelerationComparison}
\end{figure*}

\subsection*{Comparison to Fully-Corrective Frank–Wolfe}

As mentioned in the introduction, BCG is quite different from
FCFW. BCG is much faster and, in fact, FCFW is usually already
outpeformed by the much more efficient Pairwise-step CG (PCG), except
in some special cases. In Figure~\ref{fig:fcfwComp}, the left column
compares FCFW and BCG \emph{only across those iterations where FW
  steps were taken}; for completeness, we also implemented a variant
\emph{FCFW (fixed steps)} where only a fixed number of descent steps
in the correction subroutine are performed. As expected FCFW has a
better ``per-FW-iteration performance,'' because it performs
\emph{full} correction. The excessive cost of FCFW's correction
routine shows up in the wall-clock time (right column), where FCFW is
outperformed even by vanilla pairwise-step CG. This becomes even more
apparent when the iterations in the correction subroutine are broken
out and reported as well (see middle column). For purposes of
comparison, BCG and FCFW used both SiGD steps in the subroutine. (This
actually gives an advantage to FCFW, as SiGD was not known until the
current paper.) The per-iteration progress of FCFW is poor, due to
spending many iterations to optimize over active sets that are
irrelevant for the optimal solution. Our tests highlight the fact that
correction steps do not have constant cost in practice.

\begin{figure*}
  \centering
\includegraphics[width=1\linewidth]{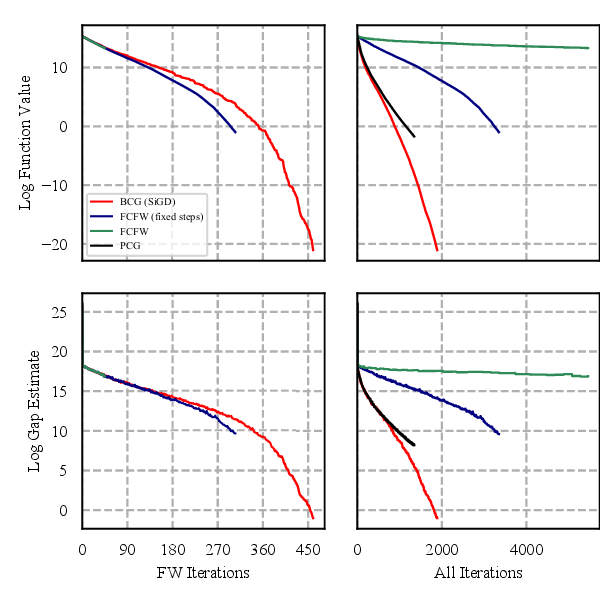}
\caption{\label{fig:fcfwComp} Comparison to FCFW across FW iterations,
  (all) iterations, and wall-clock time on a Lasso instance. Test run
  with 40s time limit. In this test we explicitly computed the dual
  gap of BCG, rather than using the estimate $\Phi_t$.}
\end{figure*}

\section{Final remarks}
\label{sec:final-remarks}

In \cite{lan2017conditional}, an accelerated method based on weak
separation and conditional gradient sliding was described.
This method provided
optimal tradeoffs between (stochastic) first-order oracle calls and
weak-separation oracle calls. An open question is whether the same
tradeoffs and acceleration could be realized by replacing SiGD
(Algorithm~\ref{alg:simplex-descent}) by an accelerated method.

After an earlier version of our work appeared online,
\cite{kerdreux2018restarting} introduced the \emph{Hölder Error Bound
  condition} (also known as \emph{sharpness} or the \emph{Łojasiewicz
  growth condition}).  This is a family of conditions parameterized by
$0 < p\leq 1$, interpolating between strongly convex (\(p=0\)) and
convex functions (\(p=1\)).  For such functions,
convergence rate $O(1/\varepsilon^p)$ has been shown for Away-step
Frank–Wolfe algorithms, among others.  Our analysis can be similarly
extended to objective functions satisfying this condition, leading to
similar convergence rates.

\section*{Acknowledgements}
\label{sec:acknowledgements}

We are indebted to Swati Gupta for the helpful discussions. Research reported in this paper was partially supported by NSF CAREER Award CMMI-1452463, and also
 NSF Awards 1628384, 1634597,
and 1740707; Subcontract 8F-30039
from Argonne National Laboratory; Award N660011824020 from the
DARPA Lagrange Program; and Award W911NF-18-1-0223 from the Army Research Office.


\bibliographystyle{abbrvnat}
\bibliography{bibliography}

\appendix

\section{Upper bound on simplicial curvature}
\label{sec:upper-bound-simpl-curv}

\begin{lemma}
  \label{lem:upper-bound-simplex-smoothness}
  Let \(f \colon P \to \mathbb{R}\) be an \(L\)-smooth function
  over a polytope \(P\) with diameter \(D\) in some norm \(\norm{}\).
  Let \(S\) be a set of vertices of \(P\).
  Then the function \(f_{S}\) from Section~\ref{sec:proj-grad-desc}
  is smooth with smoothness parameter at most
  \begin{equation}
    \label{eq:upper-bound-simplex-smoothness}
    L_{f_{S}} \leq \frac{L D^{2} \size{S}}{4}
    .
  \end{equation}
\begin{proof}
Let \(S = \{v_{1}, \dots, v_{k}\}\).
Recall that
\(f_{S} \colon \Delta^{k} \to \mathbb{R}\) is defined
on the probability simplex via \(f_{S}(\alpha) \coloneqq f(A \alpha)\),
where \(A\) is the linear operator
defined via \(A \alpha \coloneqq \sum_{i=1}^{k} \alpha_{i} v_{i}\).
We need to show
\begin{equation}
  \label{eq:3}
  f_{S}(\alpha) - f_{S}(\beta) - \nabla f_{S}(\beta) (\alpha - \beta)
  \leq
  \frac{L D^{2} \size{S}}{8}
  \cdot
  \norm[2]{\alpha - \beta}^{2}
  ,
  \qquad
  \alpha, \beta \in \Delta^{k}
  .
\end{equation}
We start by expressing the left-hand side in terms of \(f\)
and applying
the smoothness of \(f\):
\begin{equation}
  \label{eq:4}
  f_{S}(\alpha) - f_{S}(\beta) - \nabla f_{S}(\beta) (\alpha - \beta)
  =
  f(A \alpha) - f(A \beta)
  - \nabla f(A \beta) \cdot (A \alpha - A \beta)
  \leq
  \frac{L}{2} \cdot \norm{A \alpha - A \beta}^{2}
  .
\end{equation}
Let \(\gamma_{+} \coloneqq \max \{\alpha - \beta, 0\}\)
and \(\gamma_{-} \coloneqq \max \{\beta - \alpha, 0\}\)
with the maximum taken coordinatewise.
Then \(\alpha - \beta = \gamma_{+} - \gamma_{-}\)
with \(\gamma_{+}\) and \(\gamma_{-}\) nonnegative vectors with
disjoint support.
In particular,
\begin{equation}
  \label{eq:5}
  \norm[2]{\alpha - \beta}^{2}
  =
  \norm[2]{\gamma_{+} - \gamma_{-}}^{2}
  =
  \norm[2]{\gamma_{+}}^{2} + \norm[2]{\gamma_{-}}^{2}
  .
\end{equation}

Let \(\allOne\) denote the vector of length \(k\) with all its
coordinates \(1\).  Since \(\allOne \alpha = \allOne \beta = 1\), we
have \(\allOne \gamma_{+} = \allOne \gamma_{-}\).
Let $t$ denote this last quantity, which is clearly nonnegative.
If \(t = 0\) then \(\gamma_{+} = \gamma_{-} = 0\) and
\(\alpha = \beta\), hence the claimed \eqref{eq:3} is obvious.  If \(t
> 0\) then \(\gamma_{+} / t\) and \(\gamma_{-} / t\) are points of the
simplex \(\Delta^{k}\), therefore
\begin{equation}
  \label{eq:6}
  D \geq \norm{A (\gamma_{+} / t) - A (\gamma_{-} / t)}
  = \frac{\norm{A \alpha - A \beta}}{t}
  .
\end{equation}
Using \eqref{eq:5} with \(k_{+}\) and \(k_{-}\) denoting the number of
non-zero coordinates of \(\gamma_{+}\) and \(\gamma_{-}\),
respectively, we obtain
\begin{equation}
  \label{eq:7}
  \norm[2]{\alpha - \beta}^{2}
  =
  \norm[2]{\gamma_{+}}^{2} + \norm[2]{\gamma_{-}}^{2}
  \geq
  t^{2}
  \left(
    \frac{1}{k_{+}} + \frac{1}{k_{-}}
  \right)
  \geq
  t^{2} \cdot \frac{4}{k_{+} + k_{-}}
  \geq
  \frac{4 t^{2}}{k}
  .
\end{equation}
By \eqref{eq:6} and \eqref{eq:7} we conclude that
\(\norm{A \alpha - A \beta}^{2} \leq
k D^{2} \norm[2]{\alpha - \beta}^{2} / 4\),
which together with \eqref{eq:4} proves the claim \eqref{eq:3}.
\end{proof}
\end{lemma}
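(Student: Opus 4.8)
The plan is to realize $f_{S}$ as a composition with a linear map and then reduce to a purely geometric inequality about $P$. Write $A\colon \R^{k}\to\R^{n}$ for the linear operator $A\alpha \coloneqq \sum_{i=1}^{k}\alpha_{i}v_{i}$, so that $f_{S}(\alpha) = f(A\alpha)$ and $A$ maps $\Delta^{k}$ onto $\conv S$. Since $A$ is linear, the chain rule gives $\nabla f_{S}(\beta)(\alpha-\beta) = \nabla f(A\beta)(A\alpha - A\beta)$, so the smoothness defect of $f_{S}$ at a pair $\alpha,\beta\in\Delta^{k}$ equals the smoothness defect of $f$ at the pair $A\alpha, A\beta \in \conv S \subseteq P$; by $L$-smoothness of $f$ this is at most $\tfrac{L}{2}\norm{A\alpha-A\beta}^{2}$. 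Hence it suffices to establish the bound $\norm{A\alpha-A\beta}^{2} \le \tfrac{\size{S}D^{2}}{4}\,\norm[2]{\alpha-\beta}^{2}$ for all $\alpha,\beta\in\Delta^{k}$; substituting this into the displayed inequality and comparing with the definition of $L_{f_{S}}$ yields $L_{f_{S}} \le \tfrac{LD^{2}\size{S}}{4}$.

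For the geometric bound I would split $\alpha-\beta$ coordinatewise into its positive and negative parts, $\alpha-\beta = \gamma_{+}-\gamma_{-}$, which are nonnegative vectors with disjoint support, so that $\norm[2]{\alpha-\beta}^{2} = \norm[2]{\gamma_{+}}^{2}+\norm[2]{\gamma_{-}}^{2}$. Because $\allOne\alpha = \allOne\beta = 1$, the two parts carry the same total mass $t \coloneqq \allOne\gamma_{+} = \allOne\gamma_{-}$. If $t=0$ then $\alpha=\beta$ and there is nothing to prove; otherwise $\gamma_{+}/t$ and $\gamma_{-}/t$ lie in $\Delta^{k}$, and $A\alpha - A\beta = t\bigl(A(\gamma_{+}/t) - A(\gamma_{-}/t)\bigr)$ is $t$ times a difference of two points of $\conv S\subseteq P$, whence $\norm{A\alpha-A\beta} \le tD$.

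It then remains to bound $t$ from below by $\norm[2]{\alpha-\beta}$. If $\gamma_{+}$ and $\gamma_{-}$ have $k_{+}$ and $k_{-}$ nonzero coordinates respectively, Cauchy–Schwarz gives $\norm[2]{\gamma_{\pm}}^{2} \ge t^{2}/k_{\pm}$, hence $\norm[2]{\alpha-\beta}^{2} \ge t^{2}(1/k_{+}+1/k_{-})$. Since $\gamma_{+}$ and $\gamma_{-}$ have disjoint support in $\{1,\dots,k\}$ we have $k_{+}+k_{-}\le k = \size{S}$, and the AM–HM inequality gives $1/k_{+}+1/k_{-}\ge 4/(k_{+}+k_{-})\ge 4/\size{S}$; therefore $t^{2}\le \tfrac{\size{S}}{4}\norm[2]{\alpha-\beta}^{2}$. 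Combining with $\norm{A\alpha-A\beta}^{2}\le t^{2}D^{2}$ gives the geometric bound and completes the proof.

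The main point requiring care is the lower bound $\norm[2]{\alpha-\beta}^{2}\ge 4t^{2}/\size{S}$: the disjoint-support decomposition is what lets the $\ell_{2}$-norm of $\alpha-\beta$ split cleanly into contributions of $\gamma_{+}$ and $\gamma_{-}$, and one must keep track that the two parts share the common mass $t$ so that, after rescaling into $\Delta^{k}$, the diameter bound for $P$ can be applied. A secondary subtlety is using $k_{+}+k_{-}\le\size{S}$ (rather than $2\size{S}$), which follows from the disjoint supports and is exactly what pins the final constant at $\size{S}/4$. Everything else is routine bookkeeping.
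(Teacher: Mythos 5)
Your proposal is correct and follows essentially the same argument as the paper's proof: the reduction via the linear map $A$ and $L$-smoothness of $f$, the decomposition $\alpha-\beta=\gamma_{+}-\gamma_{-}$ into disjointly supported parts of equal mass $t$, the diameter bound $\norm{A\alpha-A\beta}\le tD$, and the Cauchy--Schwarz/AM--HM estimate $\norm[2]{\alpha-\beta}^{2}\ge 4t^{2}/\size{S}$ are all exactly the steps the paper takes. No gaps.
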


\begin{lemma}
  \label{lem:curvature-by-simplicial}
  Let \(f \colon P \to \mathbb{R}\) be a convex function
  over a polytope \(P\) with finite simplicial curvature
  \(C^{\Delta}\).
  Then \(f\) has curvature at most
  \begin{equation}
    \label{eq:lem:curvature-by-simplicial}
    C \leq 2 C^{\Delta}.
  \end{equation}
\begin{proof}
Let \(x, y \in P\) be two distinct points of \(P\).
The line through \(x\) and \(y\) intersects \(P\)
in a segment \([w, z]\), where \(w\) and \(z\) are points on the
\emph{boundary} of \(P\), i.e., contained in facets of \(P\),
which have dimension \(\dim P - 1\).
Therefore by Caratheodory's theorem there are vertex sets \(S_{w}\),
\(S_{z}\) of \(P\) of size at most \(\dim P\) with
\(w \in \conv S_{w}\) and \(z \in \conv S_{z}\).
As such \(x,y \in \conv S\) with \(S \coloneqq S_{w} \cup S_{z}\) and
\(\card{S} \leq 2 \dim P\).

Reusing the notation from the proof of
Lemma~\ref{lem:upper-bound-simplex-smoothness},
let \(k \coloneqq \size{S}\) and \(A\) be a linear transformation
with \(S = \{A e_{1}, \dotsc, A e_{k}\}\)
and \(f_{S}(\zeta) = f(A \zeta)\) for all \(\zeta \in \Delta^{k}\).
Since \(x, y \in \conv S\), there are \(\alpha, \beta \in \Delta^{k}\)
with \(x = A \alpha\) and \(y = A \beta\).
Therefore by smoothness of \(f_{S}\) together with
\(L_{f_{S}} \leq C^{\Delta}\)
and \(\norm{\beta - \alpha} \leq \sqrt{2}\):
\begin{equation}
  \label{eq:1}
 \begin{split}
  f(\gamma y + (1 - \gamma) x) - f(x) - \gamma \nabla f(x) (y - x)
  &
  =
  f(\gamma A \beta + (1 - \gamma) A \alpha) - f(A \alpha)
  - \gamma \nabla f(A \alpha) \cdot (A \beta - A \alpha)
  \\
  &
  =
  f_{S}(\gamma \beta + (1 - \gamma) \alpha) - f_{S}(\alpha)
  - \gamma \nabla f_{S}(\alpha) (\beta - \alpha)
  \\
  &
  \leq
  \frac{L_{f_{S}} \norm{\gamma (\beta - \alpha)}^{2}}{2}
  =
  \frac{L_{f_{S}} \norm{\beta - \alpha}^{2}}{2} \cdot \gamma^{2}
  \leq C^{\Delta} \gamma^{2}
 \end{split}
\end{equation}
showing that \(C \leq 2 C^{\Delta}\) as claimed.
\end{proof}
\end{lemma}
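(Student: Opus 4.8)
The plan is to derive the curvature inequality \eqref{eq:C} for $f$ on $P$ from the smoothness inequality for a single restricted function $f_{S}$ (as in \eqref{eq:def.fS}) on a probability simplex, for a well-chosen vertex set $S$. Fix two points $x, y \in P$; the case $x = y$ makes \eqref{eq:C} trivial, so assume $x \neq y$. Everything hinges on locating one face of $P$, spanned by few enough vertices, whose convex hull contains the entire segment $[x, y]$, so that the definition of $C^{\Delta}$ in \eqref{eq:CDel} applies to it.

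The crux, and the step I expect to require the most care, is producing a vertex set $S$ of $P$ with $\size{S} \leq 2\dim P$ and $x, y \in \conv S$. I would take the chord $[w, z] \coloneqq P \cap \ell$ cut out by the line $\ell$ through $x$ and $y$; its endpoints $w$ and $z$ lie on the boundary of $P$, hence each lies in a facet, a face of dimension $\dim P - 1$. Applying Caratheodory's theorem inside these facets yields vertex sets $S_{w}, S_{z}$ of $P$ with $\size{S_{w}}, \size{S_{z}} \leq \dim P$ and $w \in \conv S_{w}$, $z \in \conv S_{z}$. Setting $S \coloneqq S_{w} \cup S_{z}$ then gives $\size{S} \leq 2\dim P$ and $[x, y] \subseteq [w, z] \subseteq \conv S$, which is exactly why the definition \eqref{eq:CDel} admits sets of size up to $2\dim P$ rather than the $\dim P + 1$ one might naively expect.

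With $S = \{v_{1}, \dotsc, v_{k}\}$ fixed, I would write $x = A\alpha$ and $y = A\beta$ for some $\alpha, \beta \in \Delta^{k}$, where $A \colon e_{i} \mapsto v_{i}$, so that $f_{S} = f \circ A$ and, by the chain rule, $\nabla f_{S}(\alpha)(\beta - \alpha) = \nabla f(x)(y - x)$. The left-hand side of \eqref{eq:C} then equals $f_{S}(\gamma\beta + (1-\gamma)\alpha) - f_{S}(\alpha) - \gamma \nabla f_{S}(\alpha)(\beta - \alpha)$, which by $L_{f_{S}}$-smoothness of $f_{S}$ is at most $\tfrac{1}{2} L_{f_{S}} \norm[2]{\gamma(\beta - \alpha)}^{2} = \tfrac{1}{2} L_{f_{S}} \norm[2]{\beta - \alpha}^{2} \gamma^{2}$. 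Bounding $\norm[2]{\beta - \alpha}^{2} \leq 2$, the squared $\ell_{2}$-diameter of $\Delta^{k}$, and using $L_{f_{S}} \leq C^{\Delta}$ from \eqref{eq:CDel}, this is at most $C^{\Delta}\gamma^{2} = \tfrac{1}{2}(2 C^{\Delta})\gamma^{2}$, i.e.\ \eqref{eq:C} holds with $C = 2 C^{\Delta}$. The factor $2$ is precisely that squared simplex diameter, and beyond the geometric construction of $S$ the remainder is routine.
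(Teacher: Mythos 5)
Your proposal is correct and follows essentially the same route as the paper's own proof: the chord $[w,z]$ through $x$ and $y$, Caratheodory applied within the two facets containing $w$ and $z$ to get $S = S_w \cup S_z$ with $\size{S} \leq 2\dim P$, and then the smoothness bound for $f_S$ combined with $\norm{\beta-\alpha} \leq \sqrt{2}$ and $L_{f_S} \leq C^\Delta$. No substantive differences.
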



\begin{figure*}[htb]
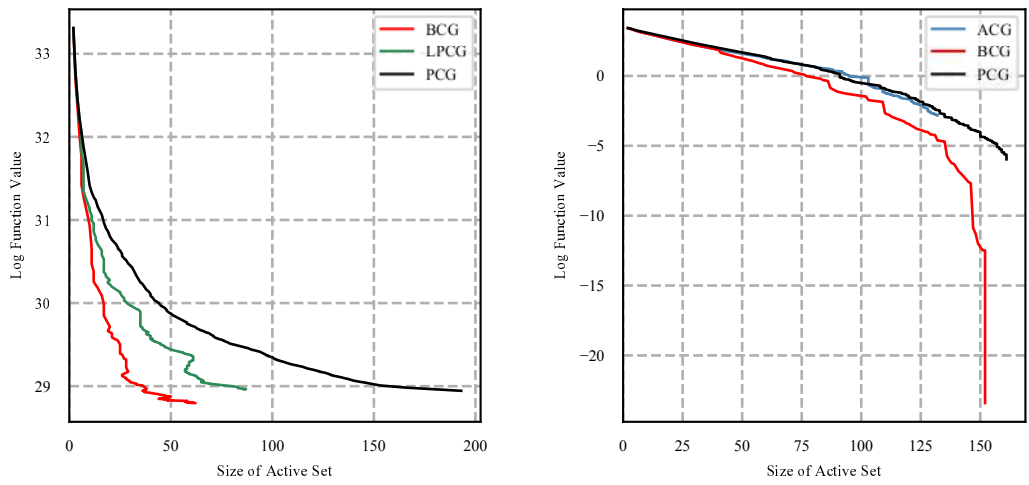

  \centering
  \sidebyside%
  {sparsity/pPs_netgen_08a_square_norm_Ax_minus_b_01070842_mode2_seed0_maxLsStep100_lsAcc04_postOptFalse_relaxTrue_innerDim800_sparsity200_scale1}%
  {sparsity/CsPA_signal_3000_signal_recovery_01070844_mode2_seed0_maxLsStep100_lsAcc08_postOptFalse_relaxTrue_innerDim1000_density005_sigma005}
\caption{Comparison of ACG, PCG and LPCG against BCG in
  function value and size of the active set.
Left: Video Co-Localization instance.
Right: Sparse signal recovery.
}
  \label{fig:sparsity}
\end{figure*}

\end{document}